\newcommand{\E}{{\mathbb{E}}}
\newcommand{\R}{{\mathbb{R}}}
\newcommand{\ov}{\overline}
\newcommand{\vf}{\varphi}
\newcommand{\cT}{{\mathcal T}}
\newcommand{\cG}{{\mathcal G}}
\newcommand{\cP}{{\mathcal P}}
\newcommand{\cS}{{\mathcal S}}
\newcommand{\cE}{{\mathcal E}}
\newcommand{\wt}{\widetilde}
\newcommand{\1}{\mathbf{1}}
\newcommand{\mbR}{{\mathbb R}}
\newcommand{\cF}{{\mathcal F}}
\newcommand{\mbN}{{\mathbb N}}
\newcommand{\mbZ}{{\mathbb Z}}
\newcommand{\pt}{\partial}
\newcommand{\ve}{\varepsilon}
\newcommand{\wh}{\widehat}
\renewcommand{\ker}{\mathop{\rm ker}}
\theoremstyle{plain}
\newtheorem{thm}{Theorem}
\newtheorem{lem}{Lemma}
\newtheorem{corollary}{Corollary}
\theoremstyle{definition}
\newtheorem{defn}{Definition}
\begin{document}
UDC 519.21
\begin{center}
{\Large\bf Clark representation for self-intersection local times of Gaussian integrators}\\[1cm]
A.A. Dorogovtsev, O.L. Izyumtseva, N. Salhi
\end{center}
andrey.dorogovtsev@gmail.com, olgaizyumtseva@gmail.com, salhi.naoufel@gmail.com\\
Institute of Mathematics National Academy of Sciences of Ukraine, 01004 Ukraine, Kiev-4,
3, Tereschenkivska st.;
University of Tunis El Manar, Rommana 1068, Tunisie\\
\vskip20pt

In present article we prove the existence of multiple self-intersection local times, describe its It\^{o}-Wiener expansion and establish Clark representation for the class of Gaussian integrators generated by operators with a finite dimensional kernel.
\section{Introduction}
Let $w(t),\ t\in[0, 1]$ be one-dimensional Wiener process. The following well-known Clark theorem describes functionals from $w$ in terms of It\^{o} stochastic integral.

\begin{thm}(Clark) \cite{1}
\label{thm1}
If a square-integrable random variable $\alpha$ is measurable with respect to $w,$ then it can be uniquely represented as
$$
\alpha=E\alpha +\int^1_0\eta(s)dw(s).
$$
\end{thm}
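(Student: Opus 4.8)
The plan is to reduce the statement to the martingale representation theorem for the Brownian filtration. Let $\cF_t=\sigma(w(s):s\le t)$ denote the natural filtration of $w$, augmented in the usual way, and consider the process
$$
M_t=E[\alpha\mid\cF_t],\qquad t\in[0,1].
$$
Since $\alpha$ is square-integrable, $M$ is a square-integrable $(\cF_t)$-martingale. Because $\cF_0$ is trivial we have $M_0=E\alpha$, and because $\alpha$ is $\cF_1$-measurable we have $M_1=\alpha$. Thus everything reduces to representing $M$ as a stochastic integral against $w$.

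First I would invoke (or establish) the martingale representation theorem: every square-integrable $(\cF_t)$-martingale $M$ admits a representation $M_t=M_0+\int_0^t\eta(s)\,dw(s)$ for some predictable $\eta$ with $E\int_0^1\eta(s)^2\,ds<\infty$. Applying this to $M$ and evaluating at $t=1$ gives
$$
\alpha=M_1=M_0+\int_0^1\eta(s)\,dw(s)=E\alpha+\int_0^1\eta(s)\,dw(s),
$$
which is the desired representation. Uniqueness is then immediate from the It\^o isometry: if $\int_0^1(\eta_1-\eta_2)\,dw=0$ in $L^2$, then $E\int_0^1(\eta_1-\eta_2)^2\,ds=0$, forcing $\eta_1=\eta_2$ for almost all $s$.

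The real work sits in the martingale representation theorem itself, which I would obtain by a density argument that also ties in with the It\^o--Wiener theme of the paper. The key step is to show that the linear span of the stochastic exponentials
$$
\cE(h)=\exp\Bigl(\int_0^1 h(s)\,dw(s)-\tfrac12\int_0^1 h(s)^2\,ds\Bigr),\qquad h\in L^2[0,1],
$$
is dense in the space of square-integrable $\cF_1$-measurable random variables. Each $\cE(h)$ solves the linear equation $\cE_t(h)=1+\int_0^t\cE_s(h)h(s)\,dw(s)$, so it already carries an explicit representation with integrand $\eta(s)=\cE_s(h)h(s)$. Passing from this family to a general $\alpha$ by linearity and $L^2$-approximation—using the It\^o isometry to guarantee that the integrands converge to a predictable, square-integrable limit—completes the argument. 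I expect the density of the exponentials to be the most delicate point, since it rests on the injectivity of the Laplace transform on Wiener space, equivalently on the completeness of the It\^o--Wiener chaos system, rather than on any soft functional-analytic fact.
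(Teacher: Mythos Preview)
Your argument is the standard and correct route to Clark's theorem: pass to the martingale $M_t=E[\alpha\mid\cF_t]$, invoke the martingale representation theorem for the Brownian filtration, and justify the latter by the density of stochastic exponentials combined with the It\^o isometry. Uniqueness via the isometry is also handled correctly.

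However, there is nothing to compare against here: the paper does not prove Theorem~\ref{thm1}. It is stated as a classical result with a citation to Clark's original 1970 paper~\cite{1}, and serves only as motivation and background for the paper's actual contributions (the analogous representations for self-intersection local times of Gaussian integrators). So your proposal is not in competition with any argument in the paper --- it simply supplies a proof where the paper chose to quote the literature.
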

Here the process $\eta$ is adapted to the filtration generated by $w$ and satisfies relation
$$
E\int^1_0\eta(s)^2ds=Var\ \alpha.
$$
The farther development of this statement goes in two directions. The first one is to try to find precise form of $\eta$ for different special functionals $\alpha$ from $w.$  As an example of such special functional one can take a local time of Wiener process at the point $u\in{\mathbb R}$ \cite{2} which is formally defined by the formula
$$
\int^1_0\delta_u(w(s))ds.
$$
It was mentioned in \cite{3} that Clark representation for this local time has a form
$$
\int^1_0\delta_u(w(s))ds=\int^1_0p_r(u)dr+
$$
$$
+\int^1_0\Big(\int^1_rp'_{s-r}(w(r)-u)ds\Big)dw(r).
$$
Here $p_t$ is a density of normal distribution with mean zero and variance $t.$ If $\alpha$ has a stochastic derivative \cite{4}, then the process $\eta$ can be expressed through Clark-Ocone formula \cite{16}. But there are interesting cases when $\alpha $ is not stochastically differentiable.

The second direction of development of Clark formula is to establish it for more general processes than Wiener process.

This article is devoted to both mentioned problems. We will consider Gaussian integrators \cite{5} instead of Wiener process. These processes have properties which allow to construct Skorokhod integral \cite{4} with respect to them. So one can ask about Clark representation with Skorokhod integral. In the paper \cite{3} the version of Clark  representation with Skorokhod integral was obtained for the local time of integrators. In the present article we consider Clark representation for the self-intersection local time of one dimensional Gaussian integrators.

The paper is organized as follows:

In Section 2 we prove the existence of self-intersection local time for one-dimensional Gaussian integrator generated by a continuous linear operator with the finite-dimensional kernel.

In Section 3 we present It\^{o}-Wiener expansion for the self-intersection local time of Gaussian integrators.

Section 4 is devoted to Clark representation for self-intersection local time of Gaussian integrators.
\section{Existence of self-intersection local time for Gaussian integrators}
In present section we discuss the existence of self-intersection local time for one-dimensional Gaussian integrators. Every Gaussian integrator can be associated with some continuous linear operator in the space $L_2([0;1])$ via white noise representation. Since that properties of functionals from Gaussian integrator can be discussed in terms of properties of corresponding linear operator. To do this we need a notion of white noise in separable Hilbert space $H$.
\begin{defn}\cite{4}
\label{defn1}
A linear correspondence $\xi$ such that
$$
H\ni h\longrightarrow (h,\xi)\sim N(0,\|h\|^2)
$$
is said to be a white noise in Hilbert space $H.$
\end{defn}
Important example of a white noise is the following. Let
$H=L_2([0;1])$ and $w(t),\ t\in[0;1]$ be one-dimensional Winer process. Put for any $h\in L_2([0;1])$
$$
(h,\xi)=\int^{1}_{0}h(t)dw(t).
$$
It follows from the properties of stochastic integral that $\xi$ is a white noise in the space $L_2([0;1]).$
\begin{defn} (A.A. Dorogovtsev, 1998) \cite{5}
\label{defn2}
A centered Gaussian process $x(t),\ t\in[0; 1],\ x(0)=0$ is said to be an integrator if there exists a constant $c>0$ such that for an arbitrary partition $0=t_0<t_1<\ldots<t_n=1$ and real numbers $a_0, \ldots, a_{n-1} $ the following inequality holds
\end{defn}
\begin {equation}
\label{eq21}
 E\Big(\sum^{n-1}_{k=0}a_k(x(t_{k+1})-x(t_k))\Big)^2\leq c\sum^{n-1}_{k=0}a^2_k\Delta t_k.
\end{equation}

Relation \eqref{eq21} allows for any square integrable function on the interval $[0;1]$ to define a stochastic integral with respect to a Gaussian integrator. Indeed, the sum
$$
\sum^{\infty}_{k=0}a_k(x(t_{k+1})-x(t_k))
$$
can be considered as the integral
$$
\int^1_0f(t)dx(t)
$$
of the step function
$$
f=\sum^{n-1}_{k=0}a_k\ \1_{[t_k;t_{k+1})}
$$
over the process $x$ and \eqref{eq21} means that
$$
\Big\|\int^{1}_{0}f(t)dx(t)\Big\|^2_{L_2(\Omega,\mathcal{F},P)}\leq c\|f\|^2_{L_2([0;1])}.
$$
It implies that the mapping
$$
f\ \mapsto\ \int^1_0f(t)dx(t)
$$
can be extended to a continuous linear operator on the entire space $L_2([0;1]).$ The result of this extension is said to be the integral over the process $x.$

The following lemma describes a structure of Gaussian integrators.
\begin{lem} \cite{5}
\label{lem8}
A centered Gaussian process $x(t),\ t\in[0; 1]$ is an integrator iff there exist a white noise $\xi$ in $L_2([0; 1])$ and a continuous linear operator $A$ in $L_2([0; 1])$ such that
\begin{equation}
\label{eq22}
x(t)=(A\1_{[0; t]}, \xi),\ t\in[0; 1].
\end{equation}
\end{lem}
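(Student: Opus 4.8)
The plan is to prove the two implications separately, with the forward direction being routine and the converse requiring a construction. For sufficiency, suppose $x(t)=(A\1_{[0;t]},\xi)$ for some white noise $\xi$ and continuous linear operator $A$. Given a partition $0=t_0<t_1<\ldots<t_n=1$ and reals $a_0,\ldots,a_{n-1}$, I would use linearity of the pairing and of $A$ to write
$$
\sum^{n-1}_{k=0}a_k(x(t_{k+1})-x(t_k))=\Big(A\sum^{n-1}_{k=0}a_k\1_{[t_k;t_{k+1})},\xi\Big).
$$
Setting $f=\sum_{k}a_k\1_{[t_k;t_{k+1})}$ and applying the defining property of white noise together with boundedness of $A$ gives
$$
E\Big(\sum^{n-1}_{k=0}a_k(x(t_{k+1})-x(t_k))\Big)^2=\|Af\|^2\leq\|A\|^2\|f\|^2=\|A\|^2\sum^{n-1}_{k=0}a^2_k\Delta t_k,
$$
so \eqref{eq21} holds with $c=\|A\|^2$.

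For the converse, I would start from the observation that \eqref{eq21} says exactly that the map sending a step function $f$ to its integral $\int^1_0 f\,dx$ is bounded from $L_2([0;1])$ into $L_2(\Omega,\mathcal F,P)$, hence extends to a bounded linear operator $J$ on all of $L_2([0;1])$ whose range consists of centered Gaussian variables. The decisive step is to encode the covariance of $J$ as an operator: since $|E(Jf\cdot Jg)|\leq c\|f\|\|g\|$, the bilinear form $(f,g)\mapsto E(Jf\cdot Jg)$ is represented by a bounded, self-adjoint, nonnegative operator $B$ on $L_2([0;1])$, and I would set $A=B^{1/2}$, so that $E(Jf\cdot Jg)=(Bf,g)=(Af,Ag)$ for all $f,g$.

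It then remains to produce a white noise $\xi$ with $Jf=(Af,\xi)$. On the closure of the range of $A$ I would define $\xi$ by the rule $(Ag,\xi)=Jg$; this is well defined because $A(g_1-g_2)=0$ forces $E(J(g_1-g_2))^2=\|A(g_1-g_2)\|^2=0$, and it is isometric onto the Gaussian subspace generated by $x$, since $E(Ag,\xi)^2=\|Ag\|^2$. On the orthogonal complement $(\mathrm{Range}\,A)^{\perp}=\ker A$ I would let $\xi$ act as an independent white noise, enlarging the probability space if necessary, and check that the two pieces assemble into a single white noise on $L_2([0;1])$. Evaluating at $f=\1_{[0;t]}$ finally yields $(A\1_{[0;t]},\xi)=J\1_{[0;t]}=x(t)$, which is \eqref{eq22}.

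The main obstacle I expect is precisely the construction and gluing of $\xi$ in this last step: verifying that $\xi$ is well defined and isometric on $\overline{\mathrm{Range}\,A}$, that its extension across $\ker A$ by an independent copy respects orthogonality so that the global object is genuinely Gaussian with covariance the identity, and that everything lives on a common probability space. The passage through the square root $A=B^{1/2}$ is what guarantees that $A$ is continuous and self-adjoint, but one should note that the representation is not unique—any $A$ with $A^{*}A=B$ would do—so the lemma asserts only existence.
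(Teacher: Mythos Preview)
Your argument is correct. The sufficiency direction is immediate from the isometry property of white noise together with boundedness of $A$, exactly as you wrote. For necessity, your route through the covariance operator $B$ and its square root $A=B^{1/2}$ is the standard one: the key identities $E(Jf\cdot Jg)=(Bf,g)=(Af,Ag)$ make the map $Ag\mapsto Jg$ a well-defined isometry from $\overline{\mathrm{Range}\,A}$ into the Gaussian space generated by $x$, and completing it by an independent white noise on $(\mathrm{Range}\,A)^{\perp}=\ker A$ (valid since $A$ is self-adjoint) yields a genuine white noise on all of $L_2([0;1])$. Your closing remark on non-uniqueness is also accurate: any bounded $A$ with $A^{*}A=B$ works, and the lemma only claims existence.

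As for comparison with the paper: there is nothing to compare against, since the paper does not prove this lemma but cites it from \cite{5}. Your proof is the natural one and is presumably close in spirit to the original argument in that reference.
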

It follows from the representation \eqref{eq22} that all properties of Gaussian integrator can be described in terms of operator $A.$ Examples of integrators are Wiener process, Brownian bridge, Fractional Brownian motion with Hurst parameter $\alpha\geq\frac{1}{2}$ \cite{19}.

The main aim of present section is to find conditions on the operator $A$ in the representation \eqref{eq22} that guarantee the existence of self-intersection local time.

Let us recall the definition of self-intersection local time for a one dimensional random process $x(t),\ t\in[0; 1].$ For $0\leq a<b$
denote by
$$
\Delta_k(a,b)=\{a\leq t_1\leq\ldots\leq t_k\leq b\}.
$$

Put $\Delta_k(0,1)=:\Delta_k.$
Consider the family of approximations
$$
T^{x}_{\ve,k}=\int_{\Delta_k}\prod^{k-1}_{i=1}p_\ve(x(t_{i+1})-x(t_i))d\vec{t}.
$$
As above
$$
p_\ve(z)=\frac{1}{\sqrt{2\pi\ve}}e^{-\frac{z^2}{2\ve}}.
$$

\begin{defn}
\label{defn3}
For $p\in \mbN$ the random variable
$$
T^{x}_k=L_p\mbox{-}\lim_{\ve\to0}T^{x}_{\ve,k}
$$
is said to be the self-intersection local time of multiplicity $k$ for the process $x$ whenever the limit exists.
\end{defn}
Consider a Gaussian integrator
\begin{equation}
\label{eq1}
x(t)=(A\1_{[0;t]}, \xi),\ t\in [0;1],
\end{equation}
where $A$ is a continuous linear operator in the space $L_2([0;1])$ and $\xi $ is a white noise in the same space. Let $\tilde{A}$ be a restriction of operator $A$ onto $(\ker A)^\perp.$ The main statement of present section is the following theorem about the existence of self-intersection local time for a one dimensional Gaussian integrator.

\begin{thm}
\label{thm2}
Suppose that a continuous linear operator $A$ in the space $L_2([0; 1])$ satisfies conditions

1) $\dim\ker A<+\infty;$

2) the operator  $\tilde{A}$ is continuously invertible.

Then for the Gaussian integrator \eqref{eq1} generated by the operator $A$ for any $p\in \mbN,\ k\geq 2$ there exists the self-intersection local time
$$
T^{x}_k=L_p\mbox{-}\lim_{\ve\to0}T^{x}_{\ve,k}.
$$
\end{thm}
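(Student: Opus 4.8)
The plan is to establish $L_p$-convergence by the method of moments, reducing every computation to Gaussian integrals through the Fourier representation of the heat kernel. First I would write $p_\ve(z)=\frac1{2\pi}\int_{\mbR}\exp(i\lambda z-\tfrac{\ve}{2}\lambda^2)\,d\lambda$ and insert it into each factor of $T^x_{\ve,k}$. Because $x(t_{i+1})-x(t_i)=(A\1_{[t_i;t_{i+1}]},\xi)$, the linear combination in the exponent becomes $(Af,\xi)$ with $f=\sum_{i=1}^{k-1}\lambda_i\1_{[t_i;t_{i+1}]}$, a centered Gaussian variable of variance $\|Af\|^2$. Using $E\exp(i(Af,\xi))=\exp(-\tfrac12\|Af\|^2)$ then turns each mixed moment $E\prod_{j}T^x_{\ve_j,k}$ into a finite-dimensional integral over the product of simplices $(\Delta_k)^p$ and over the frequency variables, with integrand $\exp(-\tfrac12\|Ag\|^2)\exp(-\tfrac12\sum_j\ve_j|\vec\lambda^{(j)}|^2)$, where $g$ denotes the sum of the individual functions $f$ across the $p$ copies.

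Next I would reduce $L_p$-convergence to a single finiteness statement. As all $\ve_j\downarrow 0$ the factor $\exp(-\tfrac12\sum_j\ve_j|\vec\lambda^{(j)}|^2)$ increases monotonically to $1$, so by the monotone convergence theorem every $p$-fold moment converges to the same limit
$$
J_p=\frac1{(2\pi)^{p(k-1)}}\int_{(\Delta_k)^p}\int_{\mbR^{p(k-1)}}\exp\Big(-\tfrac12\|Ag\|^2\Big)\,d\vec\lambda\,d\vec t,
$$
provided $J_p<+\infty$. Expanding $E(T^x_{\ve_1,k}-T^x_{\ve_2,k})^{2q}$ as an alternating binomial sum of such moments and letting $\ve_1,\ve_2\to0$ then gives the value $J_{2q}\cdot(1-1)^{2q}=0$, so that $\{T^x_{\ve,k}\}$ is Cauchy in $L_{2q}$ and hence in every $L_p$. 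Thus the theorem follows once $J_p<+\infty$ is shown for all $p\in\mbN$.

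The analytic heart is a lower bound for the quadratic form $\|Ag\|^2$ coming from the two hypotheses. Let $P$ be the orthogonal projection onto $(\ker A)^\perp$; since $(I-P)g\in\ker A$ we have $Ag=\tilde A Pg$, and continuous invertibility of $\tilde A$ (condition 2) yields $\|Ag\|^2\ge c^2\|Pg\|^2$ with $c=\|\tilde A^{-1}\|^{-1}>0$. As $g$ depends linearly on the frequencies, the inner Gaussian integral is controlled by $(\det M)^{-1/2}$, where $M$ is the Gram matrix of the projected indicators $P\1_{[t_i^{(j)};t_{i+1}^{(j)}]}$, so $J_p<\infty$ reduces to integrability of $(\det M)^{-1/2}$ over $(\Delta_k)^p$. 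Writing $\|Pg\|^2=\|g\|^2-\sum_{l=1}^m\lg g,e_l\rg^2$ for an orthonormal basis $e_1,\dots,e_m$ of $\ker A$ (finite by condition 1) and applying the Sylvester determinant identity, $\det M$ factors as $\det G\cdot\det(I_m-B^\top G^{-1}B)$, where $G$ is the Gram matrix of the plain indicators $\1_{[t_i^{(j)};t_{i+1}^{(j)}]}$ and $B$ collects the numbers $\lg\1_{[t_i^{(j)};t_{i+1}^{(j)}]},e_l\rg$.

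The first factor $(\det G)^{-1/2}$ is precisely the object whose integrability over $(\Delta_k)^p$ is the classical statement that the self-intersection local times of the one-dimensional Wiener process exist in every $L_p$; this I would quote. The \emph{main obstacle} is therefore the $m\times m$ correction $\det(I_m-B^\top G^{-1}B)^{-1/2}$, which is bounded away from the set where $\ker A$ comes to lie inside the span of the interval indicators, but becomes singular there. I expect this to be the delicate point: one must show that the bad set has positive codimension in $(\Delta_k)^p$ and that the induced singularity is integrated out by the multidimensional $d\vec t$-integration — a mechanism already transparent in the case $k=2$, $m=1$, where the correction equals $\big(\delta_1-(\int_{t_1}^{t_2}e_1)^2\big)^{-1/2}$ and the two-dimensional integration over $\Delta_2$ renders it finite. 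Combining the classical Wiener bound with this control of the finite-rank correction gives $J_p<+\infty$ for all $p$, completing the proof.
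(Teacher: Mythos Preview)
Your overall architecture is sound and matches the paper: Fourier representation of $p_\ve$, Gaussian expectation, monotone convergence of the mixed moments to a common limit $J_p$, and the binomial cancellation $J_{2q}\sum_l(-1)^l\binom{2q}{l}=0$ all work exactly as you describe. Both your argument and the paper's reduce the theorem to the single analytic statement
\[
\int_{(\Delta_k)^{p}}\frac{d\vec t}{\sqrt{\det M(\vec t)}}<\infty,
\]
where $M$ is the Gram matrix of the projected indicators $(I-P_{\ker A})\1_{[t_i^{(j)};t_{i+1}^{(j)}]}$.

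The gap is in your treatment of this integral. Your Sylvester identity $\det M=\det G\cdot\det(I_m-B^\top G^{-1}B)$ is correct, and in fact coincides (via Schur complement) with the paper's Lemma~\ref{lem1}, which rewrites $\det M=G(\1_{[t_i^{(j)};t_{i+1}^{(j)}]},e_1,\dots,e_m)$. But $I_m-B^\top G^{-1}B$ is the Gram matrix of $\{(I-Q)e_l\}_{l=1}^m$, where $Q$ projects onto the span of the indicators; hence its determinant lies in $[0,1]$, so $(\det M)^{-1/2}\ge(\det G)^{-1/2}$. The classical Wiener bound you quote therefore gives only a \emph{lower} bound on the integral you need, not an upper bound. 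Separating the two factors and invoking ``positive codimension'' of the bad set is not a proof: you would have to control the product, and the singularities of the correction (where some nonzero element of $\ker A$ falls into the indicator span) interact nontrivially with those of $\det G$. Your $k=2,\ m=1$ illustration already shows this: the full integrand is $\big((t_2-t_1)-(\int_{t_1}^{t_2}e_1)^2\big)^{-1/2}$, and its integrability over $(\Delta_2)^p$ for \emph{all} $p$ and an arbitrary unit vector $e_1\in L_2[0,1]$ is not obvious from your sketch.

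The paper closes this gap by a different route that does not separate the two factors. After writing $\det M=G(\1_{[t_i^{(j)};t_{i+1}^{(j)}]},e_1,\dots,e_m)$ it decomposes $\ker A=L\oplus L^\perp$, where $L$ is the subspace of step functions inside $\ker A$. Lemma~\ref{lem3} removes the $L^\perp$-basis vectors at the cost of a fixed constant; Lemma~\ref{lem4} replaces the step-function basis of $L$ by the indicators $\1_{[s_{j-1};s_j]}$ of the \emph{fixed} partition determined by its jump points; and Lemma~\ref{lem5} then bounds the resulting Gram determinant of indicators from below by a product of partition-cell lengths. This reduces everything to an explicit Brownian-bridge integral, computed via a Beta-function recursion (Lemma~\ref{lem6}). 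The essential idea missing from your proposal is this decomposition of $\ker A$ according to step-function content: it is what converts the finite-rank perturbation into a finite number of additional \emph{fixed} interval endpoints, after which only indicator Gram determinants remain and the combinatorial Lemma~\ref{lem5} applies.
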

\begin{proof}
Let us check that $\E \;(T^{x}_{\ve_1,k}-T^{x}_{\ve_2,k})^{2p}\to 0,\ \ve\to 0.$ Note that
$$
\E \; (T^{x}_{\ve_1,k}-T^{x}_{\ve_2,k})^{2p}=
$$
$$
=\sum^{2p}_{l=0}(-1)^{2p-l}C^{l}_{2p}\; \E \;(T^{x}_{\ve_1,k})^{l}(T^{x}_{\ve_{2,k}})^{2p-l}.
$$
Therefore, to prove the theorem it suffices to check that there exists the finite limit
$\E \; (T^{x}_{\ve_1,k})^{l}(T^{x}_{\ve_2,k})^{2p-l}$ as $\ve\to 0.$ In the beginning consider the case $\ve_1=\ve_2$ and check that there exists the finite limit
$\E \; (T^{x}_{\ve,k})^{2p}$ as $\ve\to 0.$ Let $G(e_1, \ldots, e_n)$ be Gram determinant constructed from elements $e_1,\ldots, e_n$ and $B(e_1, \ldots, e_n)$ be  corresponding Gramian matrix. Note that $B(A1_{[t^1_1;t^1_2]},\ldots,A1_{[t^{2p}_{k-1};t^{2p}_{k}]})$ is a covariance matrix of Gaussian vector $\vec{x}(\vec{t}).$ Put $\Delta^{2p}_{k}=\Delta_k\times\ldots\times\Delta_k,$ where the product contains $2p$ terms.
Notice that
$$
\E \;(T^{x}_{\ve,k})^{2p}=\; \E \; \Big(\int_{\Delta_k}\prod^{k-1}_{i=1}p_\ve(x(t_{i+1})-x(t_i)d\vec{t}\Big)^{2p}=
$$
\begin{equation}
\label{eq170}
=\int_{\Delta^{2p}_{k}}E\prod^{k-1}_{i=1}p_\ve(x(t^1_{i+1})-x(t^1_i))\ldots \prod^{k-1}_{i=1}p_\ve(x(t^{2p}_{i+1})-x(t^{2p}_i))d\vec{t}.
\end{equation}
Denote by
$$
\vec{x}(\vec{t})=(x(t^1_2)-x(t^1_1),\ldots,x(t^{2p}_{k})-x(t^{2p}_{k-1})),
$$
$$
p^{2p(k-1)}_{\ve}(y)=\frac{1}{(2\pi\ve)^{p(k-1)}}e^{-\frac{\|y\|^2}{2\ve}},\ y\in\mbR^{2p(k-1)}.
$$
Then \eqref{eq170}  equals
$$
\int_{\Delta^{2p}_{k}}Ep^{2p(k-1)}_{\ve}(\vec{x}(\vec{t}))d\vec{t}=
$$
$$
=\int_{\Delta^{2p}_{k}}\int_{\mbR^{2p(k-1)}}\frac{1}{(2\pi\ve)^{p(k-1)}}e^{-\frac{1}{2}(\frac{1}{\ve}Iy,y)}
\frac{1}{(2\pi)^{p(k-1)}\sqrt{G(A\1_{[t^1_1;t^1_2]},\ldots,A\1_{[t^{2p}_{k-1};t^{2p}_{k}]})}}\cdot
$$
$$
\cdot e^{-\frac{1}{2}(B^{-1}(A\1_{[t^1_1;t^1_2]},\ldots,A\1_{[t^{2p}_{k-1};t^{2p}_{k}]})y,y)}dy d\vec{t}=
$$
$$
=\int_{\Delta^{2p}_{k}}\int_{\mbR^{2p(k-1)}}\frac{1}{(2\pi\ve)^{p(k-1)}}\frac{1}{(2\pi)^{p(k-1)}\sqrt{G(A\1_{[t^1_1;t^1_2]},\ldots,A\1_{[t^{2p}_{k-1};t^{2p}_{k}]})}}\cdot
$$
$$
\cdot e^{-\frac{1}{2}((\frac{1}{\ve}I+B^{-1}(A\1_{[t^1_1;t^1_2]},\ldots,A\1_{[t^{2p}_{k-1};t^{2p}_{k}]}))y,y)}dy d\vec{t}=
$$
$$
=\int_{\Delta^{2p}_{k}}\int_{\mbR^{2p(k-1)}}\frac{1}{(2\pi)^{p(k-1)}\ve^{p(k-1)}\sqrt{G(A\1_{[t^1_1;t^1_2]},\ldots,A\1_{[t^{2p}_{k-1};t^{2p}_{k}]})}}\cdot
$$
$$
\cdot\frac{1}{\sqrt{\det(\frac{1}{\ve}I+B^{-1}(A\1_{[t^1_1;t^1_2]},\ldots,A\1_{[t^{2p}_{k-1};t^{2p}_{k}]}))}}\cdot
$$
$$
\cdot\frac{1}{(2\pi)^{p(k-1)}\sqrt{\det(\frac{1}{\ve}I+B^{-1}(A\1_{[t^1_1;t^1_2]},\ldots,A\1_{[t^{2p}_{k-1};t^{2p}_{k}]}))^{-1}}}\cdot
$$
$$
\cdot e^{-\frac{1}{2}((\frac{1}{\ve}I+B^{-1}(A\1_{[t^1_1;t^1_2]},\ldots,A\1_{[t^{2p}_{k-1};t^{2p}_{k}]}))y,y)}dy d\vec{t}=
$$
$$
=\int_{\Delta^{2p}_{k}}\frac{1}{(2\pi)^{p(k-1)}\ve^{p(k-1)}\sqrt{G(A\1_{[t^1_1;t^1_2]},\ldots,A\1_{[t^{2p}_{k-1};t^{2p}_{k}]})}}\cdot
$$
$$
\cdot\frac{1}{\sqrt{\det(\frac{1}{\ve}I+B^{-1}(A1_{[t^1_1;t^1_2]},\ldots,A1_{[t^{2p}_{k-1};t^{2p}_{k}]}))}}d\vec{t}.
$$
One can check that
$$
=\int_{\Delta^{2p}_{k}}\frac{1}{(2\pi)^{p(k-1)}\ve^{p(k-1)}\sqrt{G(A\1_{[t^1_1;t^1_2]},\ldots,A\1_{[t^{2p}_{k-1};t^{2p}_{k}]})}}\cdot
$$
$$
\cdot\frac{1}{\sqrt{\det(\frac{1}{\ve}I+B^{-1}(A\1_{[t^1_1;t^1_2]},\ldots,A\1_{[t^{2p}_{k-1};t^{2p}_{k}]}))}}d\vec{t}=
$$
$$
=\int_{\Delta^{2p}_{k}}\frac{1}{(2\pi)^{p(k-1)}}\cdot
$$
$$
\cdot\frac{1}{\sqrt{\ve^{2p(k-1)}+\ve^{2p(k-1)-1}\sum^{2p}_{i=1}\|A\1_{[t^{i}_{k-1};t^{i}_{k}]}\|^2+\ldots+G(A\1_{[t^1_1;t^1_2]},\ldots,A\1_{[t^{2p}_{k-1};t^{2p}_{k}]})}}
d\vec{t}.
$$
Indeed, to make the calculations easy let us prove it for $p=1,\ k=2.$ Notice that
$$
\det\Big(\frac{1}{\ve}I+B^{-1}(A\1_{[t^1_1;t^1_2]},A\1_{[t^{2}_{1};t^{2}_{2}]})\Big)=
$$
$$
=\frac{1}{G(A\1_{[t^1_1;t^1_2]},A\1_{[t^{2}_{1};t^{2}_{2}]})}+\frac{1}{\ve}\sum^{2}_{i=1}\frac{\|A\1_{[t^i_1;t^i_2]}\|^2}{G(A\1_{[t^1_1;t^1_2]},A\1_{[t^{2}_{1};t^{2}_{2}]})}
+\frac{1}{\ve^2}
$$
Consequently
$$
\frac{1}{2\pi\ve\sqrt{G(A\1_{[t^1_1;t^1_2]},A\1_{[t^{2}_{1};t^{2}_{2}]})}\sqrt{\det(\frac{1}{\ve}I+B^{-1}(A\1_{[t^1_1;t^1_2]},A\1_{[t^{2}_{1};t^{2}_{2}]}))}}=
$$
$$
=\frac{1}{2\pi\sqrt{\ve^2+\ve\sum^{2}_{i=1}\|A\1_{[t^{i}_{1};t^{i}_{2}]}\|^2+G(A\1_{[t^1_1;t^1_2]},A\1_{[t^{2}_{1};t^{2}_{2}]})}}.
$$
Hence to apply Lebesgue's dominated convergence theorem one can check that the integral
\begin{equation}
\label{eq2}
\int_{\Delta^{2p}_{k}}\frac{1}{\sqrt{G \big( A\1_{[t^{1}_1;t^{1}_2]}, \ldots,A\1_{[t^{2p}_{k-1};t^{2p}_{k}]}\big)}}d\vec{t}
\end{equation}
converges.
Denote by $P$ a projection onto $\ker A.$ Then \eqref{eq2}  equals
\begin{equation}
\label{eq3}
\int_{\Delta^{2p}_{k}}\frac{1}{\sqrt{G(\tilde{A}(I-P)\1_{[t^{1}_1;t^{1}_2]}, \ldots,\tilde{A}(I-P)\1_{[t^{2p}_{k-1};t^{2p}_{k}]})}}d\vec{t}.
\end{equation}
To check that the integral \eqref{eq3} converges one can use the following statement.

\begin{thm}\cite{8}
\label{thm3}
Suppose that $A$ is a continuously invertible operator in Hilbert space $H.$ Then for any elements $e_1, \ldots, e_n$ of  space $H$ the following relation holds
$$
G(Ae_1, \ldots, Ae_n)\geq\frac{1}{\|A^{-1}\|^{2n}}G(e_1, \ldots, e_n).
$$
\end{thm}
It follows from Theorem \ref{thm3} that \eqref{eq3} less or equal to
\begin{equation}
\label{eq4}
\frac{1}{\|\tilde{A}^{-1}\|^{2p}}\int_{\Delta^{2p}_{k}}\frac{1}{\sqrt{G((I-P)\1_{[t^{1}_1;t^{1}_2]}, \ldots,(I-P)\1_{[t^{2p}_{k-1};t^{2p}_{k}]})}}d\vec{t}.
\end{equation}

To estimate the integrand in \eqref{eq4} we need

\begin{lem} \cite{10}
\label{lem1}
Let $L$ be a finite-dimensional subspace of space $L_2([0; 1]),\ P_L$ be a projection on $L.$ Suppose that $e_1,\ldots, e_m$ is an orthonormal basis in $L.$  Then for any $g_1,\ldots, g_k\in L_2([0; 1])$ the following relation holds
$$
G((I-P_L)g_1, \ldots, (I-P_L)g_k)=G(g_1, \ldots, g_k, e_1, \ldots, e_m).
$$
\end{lem}
The statement of Lemma \ref{lem1} is a generalization of Cavalieri's principle.

Let $q_1,\ldots, q_m$ be an orthonormal basis in $\ker A.$ Then it follows from Lemma \ref{lem1}  that the Gram determinant in \eqref{eq4} possesses representation
$$
G((I-P)\1_{[t^1_1; t^1_2]}, \ldots,(I-P)\1_{[t^{2p}_{k-1}; t^{2p}_k]} )=
G(\1_{[t^1_1; t^1_2]}, \ldots,\1_{[t^{2p}_{k-1}; t^{2p}_k]}, q_1, \ldots, q_m).
$$
Let us describe the set
$$
\{\vec{t}\in\Delta_{k}^{2p}: \ G(\1_{[t^1_1; t^1_2]}, \ldots,\1_{[t^{2p}_{k-1}; t^{2p}_k]}, q_1, \ldots, q_m)=0\}.
$$
Note that
$$
G(\1_{[t^1_1; t^1_2]}, \ldots,\1_{[t^{2p}_{k-1}; t^{2p}_k]}, q_1, \ldots, q_m)=0
$$
iff there exist $\alpha_1, \ldots, \alpha_{k-1}$ such that $\alpha^2_1+\ldots+\alpha^2_{k-1}>0$ and $\beta_1, \ldots, \beta_m$ which satisfy relation
\begin{equation}
\label{eq7}
\sum^{2p}_{j=1}\sum^{k-1}_{i=1}\alpha^{j}_i\1_{[t^{j}_{i}; t^{j}_{i+1}]}=\sum^m_{j=1}\beta_jq_j.
\end{equation}

Hence if $G(\1_{[t^1_1; t^1_1]}, \ldots,\1_{[t^{2p}_{k-1}; t^{2p}_k]}, q_1, \ldots, q_m)=0,$ then step functions belong to $\ker A.$ Let $L$ be a subspace generated by step functions in $\ker A$ and $\{f_k,\ k=\ov{1,s}\}$ be an orthonormal basis in $L.$ Suppose that  $e_1, \ldots, e_n$ is an orthonormal basis in an orthogonal complement of $L$ in $\ker A.$ Note that $f_1, \ldots, f_s, e_1, \ldots, e_n$ is an orthonormal basis in $\ker A$ and for any $\beta_1, \ldots, \beta_n$
$$
\sum^n_{j=1}\beta_je_j\perp L.
$$
Let us check that
\begin{equation}
\label{eq8}
\int_{\Delta_k^{2p}}\frac{d\vec{t}}
{\sqrt{G(\1_{[t^1_1; t^1_2]}, \ldots,\1_{[t^{2p}_{k-1}; t^{2p}_k]}, f_1, \ldots, f_s, e_1, \ldots, e_n )}}<+\infty.
\end{equation}
To prove \eqref{eq8} we need the following statements.
\begin{lem}\cite{10}
\label{lem3}
There exists a positive constant $c$ which depends on $f_1,\ldots,f_s$ and $e_1,\ldots, e_n$ such that for any $t_1,\ldots, t_k\in\Delta_k$
$$G(\1_{[t_1; t_2]}, \ldots,\1_{[t_{k-1}; t_k]}, f_1, \ldots, f_s, e_1, \ldots, e_n )\geq
c\cdot G(\1_{[t_1; t_2]}, \ldots,\1_{[t_{k-1}; t_k]}, f_1, \ldots, f_s).
$$
\end{lem}

\begin{lem} \cite{10}
\label{lem4}
Let $0< s_1<\ldots<s_N<1$ be the points of jumps of functions $f_1, \ldots, f_s.$ Then there exists a positive constant $c_{\vec{s}}$ which depends on $\vec{s}=(s_1, \ldots, s_N)$ such that
$$
G(\1_{[t_1;t_2]}, \ldots, \1_{[t_{k-1};t_k]},f_1, \ldots, f_s)\geq
$$
\begin{equation}
\label{eq9}
\geq c_{\vec{s}}\ G(\1_{[t_1;t_2]}, \ldots, \1_{[t_{k-1};t_k]},\1_{[0;s_1]}, \1_{[s_1;s_2]}, \ldots, \1_{[s_{N-1};s_N]},\1_{[s_N;1]}).
\end{equation}
\end{lem}
It follows from Lemma \ref{lem3}, Lemma \ref{lem4} that to finish the proof of Theorem  \ref{thm2}   one can check that
$$
\int_{\Delta^{2p}_{k}}
\frac{d\vec{t}}
{\sqrt{G(\1_{[t^1_1; t^1_2]}, \ldots,\1_{[t^{2p}_{k-1}; t^{2p}_k]},\1_{[0; s_1]}, \1_{[s_1; s_2]}, \ldots, \1_{[s_{N-1}; s_N]},\1_{[s_N;1]})}}<+\infty.
$$
Note that
$$
\int_{\Delta^{2p}_{k}}
\frac{d\vec{t}}
{\sqrt{G(\1_{[t^1_1; t^1_2]}, \ldots,\1_{[t^{2p}_{k-1}; t^{2p}_k]},\1_{[0; s_1]}, \1_{[s_1; s_2]}, \ldots, \1_{[s_{N-1}; s_N]},\1_{[s_N;1]})}}=
$$
$$
=\tilde{c}_{\vec{s}}\int_{\Delta^{2p}_{k}}
\frac{d\vec{t}}
{\sqrt{G(\1_{[t^1_1; t^1_2]}, \ldots,\1_{[t^{2p}_{k-1}; t^{2p}_k]},\frac{\1_{[0; s_1]}}{\sqrt{s_1}},\frac{ \1_{[s_1; s_2]}}{\sqrt{s_2-s_1}}, \ldots, \frac{\1_{[s_{N-1}; s_N]}}{\sqrt{s_N-s_{N-1}}},\frac{\1_{[s_N;1]}}{\sqrt{1-s_N}})}}=
$$
\begin{equation}
\label{eq10}
=\tilde{c}_{\vec{s}}\int_{\Delta^{2p}_{k}}
\frac{d\vec{t}}
{\sqrt{G((I-\tilde{P})\1_{[t^1_1; t^1_2]}, \ldots,(I-\tilde{P})\1_{[t^{2p}_{k-1}; t^{2p}_k]})}},
\end{equation}

where $\tilde{P}$ is the projection onto the linear span generated by $\{\1_{[0; s_1]},\ldots,\1_{[s_N;1]}\}.$
If integral \eqref{eq10} converges, then one can give it the following meaning
$$
\int_{\Delta^{2p}_{k}}
\frac{d\vec{t}}
{\sqrt{G((I-\tilde{P})\1_{[t^1_1; t^1_2]}, \ldots,(I-\tilde{P})\1_{[t^{2p}_{k-1}; t^{2p}_k]})}}=
$$
$$
=(2\pi)^{k-1}\lim_{\ve\to0}\ \E \; \Big(\int_{\Delta_k}\prod^{k}_{i=1}p_{\ve}(y(t_{i+1})-y(t_i))d\vec{t}\Big)^{2p}=
$$
$$
=(2\pi)^{k-1}\ \E \; \Big(\int_{\Delta_k}\prod^{k}_{i=1}\delta_0(y(t_{i+1})-y(t_i))d\vec{t}\Big)^{2p},
$$
where
$$
y(t)=((I-\tilde{P})\1_{[0;t]},\xi),\ t\in[0;1]=^{d}
$$
\begin{equation}
\label{eq11}
=^{d}
\begin{cases}
w_1(t)-\frac{t}{s_1}w_1(s_1),&t\in[0;s_1]\\
w_2(t-s_1)-\frac{t-s_1}{s_2-s_1}w_2(s_2-s_1),&t\in[s_1;s_2]\\
\ldots\\
w_{N+1}(t-s_N)-\frac{t-s_N}{1-s_N}w_{N+1}(1-s_N),&t\in[s_N;1]
\end{cases}
\end{equation}
Here $w_1,\ldots,w_N$ are independent Wiener processes. Using the representation \eqref{eq11} for \eqref{eq10}, one can see that it suffices to check that there exists
$$
L_p\mbox{-}\lim_{\ve\to0}\int_{\Delta_k}\prod^{k-1}_{i=1}p_{\ve}(\eta(t_{i+1})-\eta(t_i))d\vec{t}=
$$
$$
=:\int_{\Delta_k}\prod^{k-1}_{i=1}\delta_0(\eta(t_{i+1})-\eta(t_i))d\vec{t},
$$
where $\eta(t)=w(t)-tw(1),\ t\in [0;1]$ is Brownian bridge. Integral \eqref{eq10} in the case of Brownian bridge has the following representation
\begin{equation}
\label{eq400}
\int_{\Delta^{2p}_{k}}
\frac{d\vec{t}}
{\sqrt{G((I-P_{\1_{[0; 1]}})\1_{[t^1_1; t^1_2]}, \ldots,(I-P_{\1_{[0; 1]}})\1_{[t^{2p}_{k-1}; t^{2p}_k]})}},
\end{equation}
where $P_{\1_{[0; 1]}}$ is the projection onto the linear subspace generated by $\1_{[0; 1]}.$
It follows from Lemma \ref{lem1} that \eqref{eq400} equals
\begin{equation}
\label{eq401}
\int_{\Delta^{2p}_{k}}
\frac{d\vec{t}}
{\sqrt{G(\1_{[t^1_1; t^1_2]}, \ldots,\1_{[t^{2p}_{k-1}; t^{2p}_k]},\1_{[0; 1]})}}.
\end{equation}
Let us check that \eqref{eq401} converges. To do this we need
\begin{lem} \cite{9}
\label{lem5}
Let $\Delta_0=\O,$ and $\Delta_1, \ldots, \Delta_n$ be subsets of $[0; 1].$ Then
$$
G(\1_{\Delta_1}, \ldots, \1_{\Delta_n})\geq\prod^n_{k=1}|\Delta_k\setminus\mathop{\cup}\limits^{k-1}_{j=1}\Delta_j|.
$$
\end{lem}

As a consequence of Lemma \ref{lem5}  one can obtain the following estimate for Gram determinant

$$
G(\1_{[t^1_1; t^1_2]}, \ldots,\1_{[t^{2p}_{k-1}; t^{2p}_k]},\1_{[0; 1]})\geq\prod^N_{j=1}|\wt{\Delta}_j|,
$$
where $\wt{\Delta}_j,\ j=1, \ldots, N$ are intervals from the partition of $[0; 1]$ by end-points of intervals $[t^1_1, t^1_2],\ldots, [t^{2p}_{k-1}, t^{2p}_k].$
Hence
$$
\int_{\Delta^{2p}_{k}}
\frac{d\vec{t}}
{\sqrt{G(\1_{[t^1_1; t^1_2]}, \ldots,\1_{[t^{2p}_{k-1}; t^{2p}_k]},\1_{[0; 1]})}}\leq
$$
$$
\int_{\Delta_{2pk}}
\frac
{1}
{\sqrt{\prod^{2pk}_{i=0}(t_{i+1}-t_i)}}d\vec{t},
$$
where $t_0=0$ and $t_{2pk+1}=1.$ Integrating over $t_1$ and $t_{2pk} $ one can obtain that
$$
\int_{\Delta_{2pk}}
\frac
{1}
{\sqrt{\prod^{2pk}_{i=0}(t_{i+1}-t_i))}}d\vec{t}\leq
$$
$$
\leq c\int_{\Delta_{2pk-2}}
\frac
{1}
{\sqrt{\prod^{2pk-3}_{i=1}(t_{i+1}-t_i)}}d\vec{t},\ c>0.
$$
To calculate $\int_{\Delta_{2pk-2}}
\frac
{1}
{\sqrt{\prod^{2pk-3}_{i=1}(t_{i+1}-t_i)}}d\vec{t}$ we need
\begin{lem}
\label{lem6}
For any integer $k\geq 1$ and real numbers $0 \; \leqslant \; a \; < \; b  $
the following identity holds
\begin{equation}
\label{eq200}
\int_{\Delta _k (a,b)} \dfrac{d\vec{t}}{\sqrt{\prod _{i=1} ^{k-1}(t_{i+1}-t_i )}  } = \dfrac{\pi ^{\frac{k-1}{2}} (b-a)^{\frac{k+1}{2}}}{\Gamma (\frac{k+3}{2})} .
\end{equation}
\end{lem}
\begin{proof}
Denote by
$$  H_k(t)= \int_{\Delta _{k}(a,t) } \dfrac{dt_1\ldots dt_k}{\sqrt{(t-t_k)\prod _{i=1} ^{k-1}(t_{i+1}-t_i )}  } \; ,   $$
$$  I _k(t) = \int_{\Delta _{k}(a,t) } \dfrac{dt_1\ldots dt_k}{\sqrt{\prod _{i=1} ^{k-1}(t_{i+1}-t_i )}  } \; ,\ t>a.   $$
One can see that
\begin{equation}
\label{eq17}
 I _{k+1}(t)=\int_a^t H_{k}(u)du
\end{equation}
and
\begin{equation}
\label{eq18}
H_{k+1}(t)= \int_a^t \dfrac{H_{k}(u)}{\sqrt{t-u}}du = \sqrt{t-a} \int_0^1 \dfrac{H_{k}(a+(t-a)\theta)}{\sqrt{1-\theta }}d\theta \; .
\end{equation}
Note that
$$ \displaystyle H_2(t)=2(t-a) B \left( \dfrac{3}{2},\dfrac{1}{2} \right).  $$
It follows from \eqref{eq18} that
$$
H_k(t) = (t-a)^{\frac{k}{2}}\prod _{r=2}^{k+1}B \left( \dfrac{r}{2},\dfrac{1}{2} \right)  = \dfrac{(t-a)^{\frac{k}{2}} \pi ^{\frac{k}{2}} }{\Gamma (\frac{k+2}{2})} \; .
$$
Using \eqref{eq17} and integrating the function $H_{k-1}$ from $a$ to $b$ one can get \eqref{eq200}.
\end{proof}
It follows from Lemma \ref{lem6} that
$$
\int_{\Delta_{2pk-2}}
\frac
{1}
{\sqrt{\prod^{2pk-3}_{i=1}(t_{i+1}-t_i)}}d\vec{t}=\dfrac{\pi ^{\frac{2pk-3}{2}}} {\Gamma (\frac{2p+1}{2})}.
$$
 In the case $\ve_1\neq\ve_2$ notice that for any $y\in\mbR$
$$
p_{\ve_1}(y)=\sqrt{\frac{\ve_2}{\ve_1}}p_{\ve_2}\Big(\sqrt{\frac{\ve_2}{\ve_1}}y\Big).
$$
Then
$$
ET^{x}_{\ve_1,2}T^{x}_{\ve_2,2}=
$$
$$
=\int_{\Delta^2_2}\sqrt{\frac{\ve_2}{\ve_1}}E p_{\ve_2}\Big(\sqrt{\frac{\ve_2}{\ve_1}}(x(t^1_2)-x(t^1_1))\Big)p_{\ve_2}(x(t^2_2)-x(t^2_1))d\vec{t}=
$$
$$
=\int_{\Delta^2_2}\frac{1}{2\pi\sqrt{\ve_1\ve_2+\ve_1\|A\1_{[t^{1}_{1};t^{1}_{2}]}\|^2+\ve_2\|A\1_{[t^{2}_{1};t^{2}_{2}]}\|^2+G(A\1_{[t^1_1;t^1_2]},A\1_{[t^{2}_{1};t^{2}_{2}]})}}d\vec{t}.
$$
Similarly, for $l=\overline{1,2p-1}$
$$
E(T^{x}_{\ve_1,k})^lT^{x}_{\ve_2,k})^{2p-l}=
$$
$$
=\int_{\Delta^{2p}_{k}}\frac{1}{(2\pi)^{p(k-1)}}\cdot
$$
$$
\cdot\frac{1}{\sqrt{\ve_1^{l(k-1)}\ve_2^{(2p-l)(k-1)}+\ldots+G(A\1_{[t^1_1;t^1_2]},\ldots,A\1_{[t^{2p}_{k-1};t^{2p}_{k}]})}}
d\vec{t}.
$$
Further using the same arguments as for the case $\ve_1=\ve_2$ one can check that there exists the finite limit
$$
E(T^{x}_{\ve_1,k})^lT^{x}_{\ve_2,k})^{2p-l}
$$
as $\ve_1,\ve_2\to0.$
\end{proof}
Notice that one-dimensional Wiener process $w(t),\ t\in[0;1]$ is Gaussian integrator generated by identity operator. Then it follows from Theorem \ref{thm2} and Lemma \ref{lem6} that
$$
E\int_{\Delta_{k}}\prod^{k-1}_{i=1}\delta_0(w(t_{i+1})-w(t_i))d\vec{t}=
$$
$$
=\frac{1}{(2\pi)^{\frac{k-1}{2}}}\int_{\Delta_{k}}
\frac
{1}
{\sqrt{G(\1_{[t_1;t_2]},\ldots,\1_{[t_{k-1};t_k]})}}d\vec{t}=
$$
$$
\frac{1}{(2\pi)^{\frac{k-1}{2}}}\int_{\Delta_{k}}
\frac
{1}
{\sqrt{\prod^{k-1}_{i=1}(t_{i+1}-t_i)}}d\vec{t}=
$$
$$
$$
$$
=\dfrac{ 1 }{ 2^{\frac{k-1}{2}} \Gamma \big(  \frac{k+3}{2} \big)   }\; .
$$
Moreover, it follows from Theorem \ref{thm2} that for any $p\in\mbN$ there exists some positive constant $c(k)$ which depends on $k$ such that
$$
E(T^x_k)^{2p}\leq c(k)E(T^y_k)^{2p},
$$
where the process $y$ is defined in \eqref{eq11}.
\section{ It\^{o}-Wiener expansion for the self-intersection local time of Gaussian integrators }
Let $\xi$ be a white noise in the space $L_2([0;1])$ generated by Wiener process $w(t),\ t\in[0;1].$ Consider a one dimensional Gaussian integrator $x(t)=(A1_{[0;t]},\xi),\ t\in [0;1]$ with an operator $A$ which satisfies conditions 1), 2) of Theorem \ref{thm2}. For the process $x$ there exists the self-intersection local time $T_k^x.$ The aim of this section is to find It\^{o}-Wiener expansion of random variable $T_k^x.$ For this purpose, we will use Fourier-Wiener transform. Let us start from general definitions of It\^{o}-Wiener expansion and Fourier-Wiener transform. Let $\alpha$ be a square integrable random variable which is measurable with respect to a white noise $\xi.$
\begin{thm}(It\^{o}-Wiener expansion)\cite{15}
\label{thm4}
The random variable $\alpha$ can be uniquely represented as the convergent in mean square series of orthogonal summands
$$
\alpha=\sum^{\infty}_{k=0}\int_{\Delta_k}a_k(t_1,\ldots,t_k)dw(t_1)\ldots dw(t_k),
$$
where
$$
a_0=E\alpha,
$$
$$
\int_{\Delta_k}a^2_k(t_1,\ldots,t_k)dt_1\ldots dt_k<+\infty,\ k\geq1.
$$
Moreover,
$$
E\alpha^2=\sum^{\infty}_{k=0}\int_{\Delta_k}a^2_k(t_1,\ldots,t_k)dt_1\ldots dt_k.
$$
\end{thm}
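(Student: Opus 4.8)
The plan is to prove the three assertions of the theorem — existence of the expansion, the Parseval identity, and uniqueness — by showing that the iterated It\^o integrals over the simplices $\Delta_k$ exhaust $L_2(\Omega,\cF,P)$, where $\cF$ is the $\sigma$-algebra generated by $\xi$. First I would fix the building blocks: for $a_k\in L_2(\Delta_k)$ set $J_k(a_k)=\int_{\Delta_k}a_k(t_1,\ldots,t_k)\,dw(t_1)\cdots dw(t_k)$, understood as the iterated It\^o integral. Integrating from the innermost variable outward and applying the It\^o isometry at each stage gives the orthogonality relation $E\,J_k(a_k)J_m(b_m)=\delta_{km}\int_{\Delta_k}a_kb_k\,d\vec t$. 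In particular the subspaces $\mathcal{H}_k=\{J_k(a_k):a_k\in L_2(\Delta_k)\}$ are mutually orthogonal, each $\mathcal{H}_k$ is closed as the isometric image of $L_2(\Delta_k)$, and $\mathcal{H}_0=\mbR$. Thus the theorem reduces to the single claim $L_2(\Omega,\cF,P)=\bigoplus_{k\ge0}\mathcal{H}_k$; granted this, every $\alpha$ has a unique $L_2$-convergent orthogonal expansion with square-summable components and the displayed Parseval identity is automatic.

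To identify the closed span of $\bigoplus_k\mathcal{H}_k$ I would work with the stochastic exponentials. For $h\in L_2([0;1])$ put $\cE(h)=\exp\big(\int_0^1 h\,dw-\tfrac12\int_0^1 h^2\big)$. Solving the linear equation $dM_t=M_th(t)\,dw(t)$, $M_0=1$, by Picard iteration expresses $\cE(h)$ as the $L_2$-convergent series $\sum_{k\ge0}J_k(h^{\otimes k})$ with $h^{\otimes k}(t_1,\ldots,t_k)=h(t_1)\cdots h(t_k)$, so every $\cE(h)$ already lies in $\bigoplus_k\mathcal{H}_k$. It therefore remains to show that the linear span of $\{\cE(h):h\in L_2([0;1])\}$ is dense in $L_2(\Omega,\cF,P)$. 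I expect this density statement to be the main obstacle and the real content of the proof.

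I would prove density by duality. Suppose $\alpha\in L_2(\Omega,\cF,P)$ is orthogonal to every $\cE(h)$; equivalently $E\big[\alpha\,\exp(\int_0^1 h\,dw)\big]=0$ for all $h$. Fix an orthonormal basis $(e_j)$ of $L_2([0;1])$ and restrict to $h=\sum_{j=1}^n\lambda_je_j$; writing $\xi_j=\int_0^1 e_j\,dw$, which are independent standard Gaussians, the hypothesis reads $E\big[\alpha\,e^{\langle\lambda,\xi\rangle}\big]=0$ for every $\lambda\in\mbR^n$. By conditioning on $\xi=(\xi_1,\ldots,\xi_n)$ the left-hand side equals the Laplace transform of the finite signed measure $g(x)\rho(x)\,dx$ on $\mbR^n$, where $g(x)=E[\alpha\mid\xi=x]$ and $\rho$ is the standard Gaussian density; its identical vanishing forces $g\rho\equiv0$, hence $g=0$, i.e. $E[\alpha\mid\xi_1,\ldots,\xi_n]=0$ almost surely. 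As $n\to\infty$ the $\sigma$-algebras generated by $\xi_1,\ldots,\xi_n$ increase to $\cF$, so L\'evy's martingale convergence theorem gives $\alpha=E[\alpha\mid\cF]=\lim_n E[\alpha\mid\xi_1,\ldots,\xi_n]=0$. Hence the exponentials are total and $L_2(\Omega,\cF,P)=\bigoplus_k\mathcal{H}_k$.

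Finally I would read off the conclusions. Expanding $\alpha=\sum_k J_k(a_k)$ in this orthogonal decomposition, the component in $\mathcal{H}_0=\mbR$ is the constant $E\alpha$ because $E\,J_k(a_k)=0$ for $k\ge1$, so $a_0=E\alpha$; the square-integrability $\int_{\Delta_k}a_k^2\,d\vec t<+\infty$ of each component is built into membership in $\mathcal{H}_k$; uniqueness is immediate from the mutual orthogonality of the $\mathcal{H}_k$ together with the isometry of $J_k$ on each factor; and Pythagoras' identity for the orthogonal sum yields $E\alpha^2=\sum_{k\ge0}\int_{\Delta_k}a_k^2\,d\vec t$, which is the asserted Parseval relation.
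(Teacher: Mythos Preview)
Your argument is correct and is one of the standard routes to the chaos decomposition: It\^o isometry gives the orthogonality of the subspaces $\mathcal{H}_k$, the Dol\'eans--Dade expansion places every stochastic exponential $\cE(h)$ inside $\bigoplus_k\mathcal{H}_k$, and the duality/Laplace-transform step shows the exponentials are total, whence the decomposition and Parseval follow. One small remark: in the Laplace-transform step you should note that $E\big[|\alpha|\,e^{\langle\lambda,\xi\rangle}\big]<\infty$ for every real $\lambda$ by Cauchy--Schwarz and Gaussian integrability, so the transform is defined on all of $\mbR^n$ and extends to an entire function, which is what makes the injectivity argument go through.

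As for comparison with the paper: there is nothing to compare. Theorem~\ref{thm4} is not proved in this paper; it is quoted from \cite{15} as a background result and used without argument. Your write-up therefore supplies a proof where the paper supplies only a citation.
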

Some times for It\^{o}-Wiener expansion of random variable $\alpha$ is convenient to use general notation
$$
\alpha=\sum^{\infty}_{k=0}A_k(\xi,\ldots,\xi),
$$
where symmetric Hilbert-Schmidt forms $A_k$ on $L_2(0;1)^{\otimes k}$ have representation
$$
A_k(\xi,\ldots,\xi)=\int_{\Delta_k}a_k(t_1,\ldots,t_k)dw(t_1)\ldots dw(t_k).
$$
\begin{defn}\cite{15}
\label{defn3.1} The stochastic derivative of $\alpha$   is a
square integrable random element $D\alpha$  in $L_2(0;1)$ such that for
every $h\in L_2(0;1)$
$$
(D\alpha, h)=\sum^\infty_{k=0}kA_k(h, \xi, \ldots, \xi).
$$
\end{defn}
\begin{lem}\cite{15}
\label{lem1.5.5} The random variable $\alpha$ has a stochastic derivative iff
$$
\sum^\infty_{k=0}k\cdot k!\|A_k\|^2_k<+\infty.
$$
\end{lem}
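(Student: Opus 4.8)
The plan is to realise the series that defines $D\alpha$ as an \emph{orthogonal} series in the Hilbert space $L_2(\Omega;L_2([0;1]))$ and then invoke the elementary fact that an orthogonal series in a Hilbert space converges if and only if the sum of the squared norms of its terms is finite. First I would fix the chaos expansion $\alpha=\sum_{k=0}^\infty A_k(\xi,\ldots,\xi)$ and, for each $k\ge 1$, introduce the $L_2([0;1])$-valued random element $\beta_k$ determined by $(\beta_k,h)=kA_k(h,\xi,\ldots,\xi)$ for all $h\in L_2([0;1])$. Writing $a_k$ for the symmetric kernel of the Hilbert--Schmidt form $A_k$, one has $\beta_k(s)=kI_{k-1}(a_k(s,\cdot))$, so that for each fixed $s$ the value $\beta_k(s)$ is a $(k-1)$-fold Wiener integral; in particular $\beta_k\in L_2(\Omega;L_2([0;1]))$ because $A_k$ is Hilbert--Schmidt. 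By Definition \ref{defn3.1} the derivative $D\alpha$ exists exactly when the series $\sum_{k\ge 1}\beta_k$ converges in $L_2(\Omega;L_2([0;1]))$, and then $D\alpha=\sum_{k\ge 1}\beta_k$ (the $k=0$ term contributes nothing, which is why the sum in the statement may begin at $k=1$).

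The key computation is the second-moment identity
\begin{equation*}
E\|\beta_k\|_{L_2([0;1])}^2=k\cdot k!\,\|A_k\|_k^2 .
\end{equation*}
To obtain it I would write $E\|\beta_k\|^2=\int_0^1 k^2\,E[\,I_{k-1}(a_k(s,\cdot))^2\,]\,ds$, apply the isometry $E[I_{k-1}(g)^2]=(k-1)!\,\|g\|^2$ for the $(k-1)$-th chaos to the inner expectation, and recognise $\int_0^1\|a_k(s,\cdot)\|_{L_2([0;1]^{k-1})}^2\,ds=\|a_k\|_{L_2([0;1]^k)}^2=\|A_k\|_k^2$; the prefactor $k^2(k-1)!=k\cdot k!$ then appears. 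Equivalently one contracts against an orthonormal basis $\{h_j\}$ of $L_2([0;1])$ and sums $E(\beta_k,h_j)^2$ over $j$, using Parseval's identity to restore the full kernel norm $\|A_k\|_k^2$.

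The second ingredient is orthogonality of the terms. For $k\ne m$ and every $h$, the scalar random variables $(\beta_k,h)=kA_k(h,\xi,\ldots,\xi)$ and $(\beta_m,h)=mA_m(h,\xi,\ldots,\xi)$ lie in Wiener chaoses of distinct orders $k-1$ and $m-1$, hence are orthogonal in $L_2(\Omega)$; integrating the scalar product over $[0;1]$ gives $E(\beta_k,\beta_m)_{L_2([0;1])}=0$. Thus $\{\beta_k\}_{k\ge 1}$ is an orthogonal sequence in $L_2(\Omega;L_2([0;1]))$, and by the Pythagorean theorem $\sum_{k\ge 1}\beta_k$ converges there iff $\sum_{k\ge 1}E\|\beta_k\|^2=\sum_{k\ge 1}k\cdot k!\,\|A_k\|_k^2<+\infty$; this settles the ``if'' direction and simultaneously yields the isometry $E\|D\alpha\|^2=\sum_k k\cdot k!\,\|A_k\|_k^2$. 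For ``only if'', if $D\alpha$ exists then its own chaos decomposition in $L_2(\Omega;L_2([0;1]))$ must carry $\beta_k$ as its component of order $k-1$ (this is forced by the defining relation of Definition \ref{defn3.1} tested against every $h$), so $\sum_k E\|\beta_k\|^2=E\|D\alpha\|^2<+\infty$, which is the stated summability.

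The step I expect to be the main obstacle is the second-moment identity together with the surrounding justifications: checking that $\beta_k(s)=kI_{k-1}(a_k(s,\cdot))$ genuinely defines an element of $L_2(\Omega;L_2([0;1]))$ and that the chaos orthogonality persists in the vector-valued ($L_2([0;1])$-valued) setting. Once the weighted isometry $E\|\beta_k\|^2=k\cdot k!\,\|A_k\|_k^2$ and the orthogonality of the $\beta_k$ are established, the equivalence follows in one line from the convergence criterion for orthogonal series in a Hilbert space.
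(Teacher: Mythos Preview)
The paper does not prove this lemma; it merely quotes it from \cite{15} with no argument supplied, so there is no ``paper's own proof'' to compare against. Your approach---introducing the chaos components $\beta_k(s)=kI_{k-1}(a_k(s,\cdot))$ of the putative derivative, using the orthogonality of different Wiener chaoses to see that $\{\beta_k\}$ is an orthogonal sequence in $L_2(\Omega;L_2([0;1]))$, and reducing existence of $D\alpha$ to the Pythagorean criterion $\sum_k E\|\beta_k\|^2<\infty$---is the standard one and is what one would expect to find in \cite{15}. The second-moment identity $E\|\beta_k\|^2=k\cdot k!\,\|A_k\|_k^2$ is computed correctly under the convention that $\|A_k\|_k$ is the Hilbert--Schmidt norm of the symmetric kernel on $[0;1]^k$ and that $E[I_{k-1}(g)^2]=(k-1)!\,\|g\|^2$; just be aware that the paper writes the chaos terms as integrals over the simplex $\Delta_k$ rather than the full cube, so if $\|A_k\|_k^2$ were instead read as $\int_{\Delta_k}a_k^2$ the factor $k!$ would be absorbed differently. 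With that normalisation caveat, the argument is complete.
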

Skorokhod integral $I$ can be defined as the adjoin operator $I=D^{\ast}$ acting from the space of square-integrable $L_2(0;1)$-valued random elements to the space of square integrable random variables. In terms of It\^{o}-Wiener expansion,  Skorokhod integral can be described as follows. Suppose that the square-integrable random element $\alpha$ can be represented by the series
$$
(\alpha,h)=\sum^\infty_{k=0}A_k(h, \xi,\ldots,\xi),\ h\in L_2(0;1).
$$
It can be checked that $A_k\in L_2(0;1)^{\otimes k+1}$ for every $k\geq 0.$ Denote
by $\Lambda A_k$  its symmetrization. Then
\begin{defn}\cite{15}
\label{defn3.2} The random variable
$$
I(\alpha)=\sum^{\infty}_{k=0}\Lambda A_k(\xi,\ldots,\xi).
$$
is said to be Skorokhod integral of $\alpha.$
\end{defn}
As the tool for investigation of square integrable random variables which are measurable with respect to a white noise $\xi$ we will use Fourier-Wiener transform. Let us recall the definition.
\begin{defn}\cite{14}
\label{defn3}
The functional
$$
{\mathcal T}(\alpha)(h)=E\alpha\exp\{(h,\xi)-\frac{1}{2}\|h\|^2\}
$$
is said to be Fourier-Wiener transform of random variable $\alpha.$
\end{defn}
One can check that It\^{o}-Wiener expansion of $\exp\{(h,\xi)-\frac{1}{2}\|h\|^2\}$ has the following representation
$$
\exp\{(h,\xi)-\frac{1}{2}\|h\|^2\}=\sum^{\infty}_{n=0}\int_{\Delta_n}h(t_1)\ldots h(t_n)dw(t_1)\ldots dw(t_n).
$$
Then for random variable
$$
\alpha=\sum^{\infty}_{n=0}\int_{\Delta_n}a_n(t_1,\ldots,t_n)dw(t_1)\ldots dw(t_n)
$$
Taylor expansion of
Fourier--Wiener transform of random variable $\alpha$ has the following representation
\begin{equation}
\label{eq201}
{\mathcal T}(\alpha)(h)=\sum^{\infty}_{n=0}\int_{\Delta_n}a_n(t_1,\ldots,t_n)h(t_1)\ldots h(t_n)dt_1\ldots dt_n.
\end{equation}
Therefore, Taylor expansion \eqref{eq201} of
Fourier--Wiener transform of random variable $\alpha$ uniquely determines kernels $\{a_n,\ n\geq0\}$ of its It\^{o}-Wiener expansion \cite{6}, \cite{7}.
In this section we will keep previous notations. We suppose that $G(e_1, \ldots, e_n)$ is Gram determinant constructed from elements $e_1,\ldots, e_n$ and $B(e_1, \ldots, e_n)$ is corresponding Gramian matrix. Denote by $ P_{t_1\ldots t_k}$ an orthogonal projection onto a linear subspace generated by elements $A\mathbf{1}_{[ t_{1};t_{2}]},\cdots ,A\mathbf{1}_{[ t_{k-1};t_{k}] }.$ It can be checked \cite{6}, \cite{7}  that for every $h \in L_2([0,1]) $ the following relation holds
$$
\mathcal{T}(T_k^x )(h) =  \int_{\Delta _k } \dfrac{e^{-\frac{1}{2}\|P_{t_1\ldots t_k}h \| ^{2}}}{(2\pi )^{\frac{k-1}{2}}\sqrt{G(A\mathbf{1}_{[ t_{1};t_{2}]},\cdots ,A\mathbf{1}_{[ t_{k-1};t_{k}]} )} } \; d\vec{t}=
$$
\begin{equation}
\label{eq13}
= \sum_{n=0}^{\infty} \dfrac{(-1)^n}{2^n n!(2\pi )^{\frac{k-1}{2}}} \int_{\Delta _k } \dfrac{\|P_{t_1\ldots t_k}h \| ^{2n}}{\sqrt{G(A\mathbf{1}_{[ t_{1};t_{2}]},\cdots ,A\mathbf{1}_{[ t_{k-1};t_{k}]} )} } \; d\vec{t}.
\end{equation}
It was proved in \cite{7} that for $h\in L_2([0,1])$
$$
P_{t_1\ldots t_k}h =\frac{1}{G(A\1_{[t_1;t_2]},\ldots,A\1_{[t_{k-1};t_k]})}\sum^{k-1}_{ij=1}(-1)^{i+j}M_{ij}(A\1_{[t_i;t_{i+1}]},h)\1_{[t_j;t_{j+1}]},
$$
where $M_{ij}$ is the minor of matrix
$$
B(A\1_{[t_1;t_2]},\ldots,A\1_{[t_{k-1};t_k]})
$$
corresponding to $i$-th row and the $j$-th column. Then
$$
\|P_{t_1\ldots t_k}h \|^{2n}=
$$
$$
\frac{1}{G(A\1_{[t_1;t_2]},\ldots,A\1_{[t_{k-1};t_k]})^{2n}}\cdot
$$
$$
\cdot \sum^{k-1}_{i^1_1,j^1_1,i^1_2,j^1_2,\ldots,i^n_1,j^n_1,i^n_2,j^n_2=1}\prod^{n}_{l=1}(-1)^{i^l_1+j^l_1+i^l_2+j^l_2}
M_{i^l_1j^l_1}M_{i^l_2j^l_2}(\1_{[t_{j^l_1};t_{j^l_1+1}]},\1_{[t_{j^l_2};t_{j^l_2+1}]})\cdot
$$
$$
\cdot(A\1_{[t_{i^l_1};t_{i^l_1+1}]},h)(A\1_{[t_{i^l_2};t_{i^l_2+1}]},h)=
$$
$$
\frac{1}{G(A\1_{[t_1;t_2]},\ldots,A\1_{[t_{k-1};t_k]})^{2n}}\cdot
$$
$$
\cdot \sum^{k-1}_{i^1_1,j^1_1,i^1_2,j^1_2,\ldots,i^n_1,j^n_1,i^n_2,j^n_2=1}\prod^{n}_{l=1}(-1)^{i^l_1+j^l_1+i^l_2+j^l_2}
M_{i^l_1j^l_1}M_{i^l_2j^l_2}(\1_{[t_{j^l_1};t_{j^l_1+1}]},\1_{[t_{j^l_2};t_{j^l_2+1}]})\cdot
$$
\begin{equation}
\label{eq403}
\int^{1}_{0}\int^{1}_{0}A\1_{[t_{i^l_1};t_{i^l_1+1}]}(s^l_1)A\1_{[t_{i^l_2};t_{i^l_2+1}]}(s^l_2)h^{\otimes 2}(\vec{s^l})d\vec{s^l}.
\end{equation}
Here we use notation $h^{\otimes n}(\vec{s})$ for $h(s_1)\ldots h(s_n).$
It follows from \eqref{eq403} that \eqref{eq13} equals
$$
\sum_{n=0}^{\infty}\int_{\Delta_{2n}} \dfrac{(-1)^n}{(2\pi )^{\frac{k-1}{2}}}\int_{\Delta _k }\frac{1}{G(A\1_{[t_1;t_2]},\ldots,A\1_{[t_{k-1};t_k]})^{2n+\frac{1}{2}}}\cdot
$$
$$
\cdot \sum^{k-1}_{i^1_1,j^1_1,i^1_2,j^1_2,\ldots,i^n_1,j^n_1,i^n_2,j^n_2=1}\prod^{n}_{l=1}(-1)^{i^l_1+j^l_1+i^l_2+j^l_2}
M_{i^l_1j^l_1}M_{i^l_2j^l_2}(\1_{[t_{j^l_1};t_{j^l_1+1}]},\1_{[t_{j^l_2};t_{j^l_2+1}]})\cdot
$$
\begin{equation}
\label{eq404}
\cdot A\1_{[t_{i^l_1};t_{i^l_1+1}]}(s^l_1)A\1_{[t_{i^l_2};t_{i^l_2+1}]}(s^l_2)d\vec{t}\ h^{\otimes 2n}(\vec{s^l})d\vec{s^l}.
\end{equation}
It follows from \eqref{eq404} that kernels of It\^{o}-Wiener expansion for $\mathcal{T}(T_k^x )(h)$ have the representation
$$
a_{2n}=\dfrac{(-1)^n}{(2\pi )^{\frac{k-1}{2}}}\int_{\Delta _k }\frac{1}{G(A\1_{[t_1;t_2]},\ldots,A\1_{[t_{k-1};t_k]})^{2n+\frac{1}{2}}}\cdot
$$
$$
\cdot \sum^{k-1}_{i^1_1,j^1_1,i^1_2,j^1_2,\ldots,i^n_1,j^n_1,i^n_2,j^n_2=1}\prod^{n}_{l=1}(-1)^{i^l_1+j^l_1+i^l_2+j^l_2}
M_{i^l_1j^l_1}M_{i^l_2j^l_2}(\1_{[t_{j^l_1};t_{j^l_1+1}]},\1_{[t_{j^l_2};t_{j^l_2+1}]})\cdot
$$
$$
\cdot A\1_{[t_{i^l_1};t_{i^l_1+1}]}(s^l_1)A\1_{[t_{i^l_2};t_{i^l_2+1}]}(s^l_2)d\vec{t}. $$
Consider some examples. Let $A$ be an operator of multiplication by some measurable function $\phi.$ Suppose that there exist constants $m,\  M >0 $ such that for every $ r \in [0;1] $
$$
m \leqslant \; |\phi (r)| \; \leqslant \; M.
$$
This condition guarantees a continuous invertibility of operator $A.$ It follows from Theorem \ref{thm2} that for a Gaussian integrator generated by the given operator $A$ there exists the $k$-multiple self-intersection local time $T_k^x.$ Let us find It\^{o}-Wiener expansion of random variable $T_k^x.$
\begin{lem}
\label{lem7}
It\^{o}-Wiener expansion of $k$-multiple self-intersection local time of  Gaussian integrator $x$ associated with an operator of multiplication by the function $ \phi $ has the representation
\begin{equation}
\label{eq14}
T_k^x = \E T_k^x + \sum_{n=1}^{\infty } \dfrac{(-1)^n}{(2\pi )^{\frac{k-1}{2}}} \int _{\Delta _{2n}} \phi ^{\bigotimes 2n}(\vec{s})\; b_{2n}(\vec{s}) dw(s_1)\ldots dw(s_{2n})
\end{equation}

where $ \E T_k^x $ and $ b_{2n} $ are defined as follows
\begin{equation}
\label{eq24}
\E T_k^x = \dfrac{1}{(2\pi )^{\frac{k-1}{2}}} \int_{\Delta _k } \dfrac{d\vec{t}}{\sqrt{\prod _{i=1} ^{k-1} \int^{t_{i+1}}_{t_i} \phi ^2(r)dr}  }
\end{equation}
and
\begin{equation}
\label{eq15}
b_{2n}(\vec{s} )= \int_{\Delta _k } \dfrac{d\vec{t}}{\sqrt{\prod _{i=1} ^{k-1} \int^{t_{i+1}}_{t_i} \phi ^2(r)dr} } \sum_{1\leqslant i_1,\ldots ,i_n \leqslant k-1}  \dfrac{1}{ \prod_{j=1}^{n} \int^{t_{i_{j}+1}}_{t_{i_j}} \phi ^2(r)dr}  \mathbf{1}_{[t_{i_1}; t_{i_{1}+1} ]^2\times \cdots \times [t_{i_n}; t_{i_{n}+1} ]^2} (\vec{s})  \; .
\end{equation}
\end{lem}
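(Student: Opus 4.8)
The plan is to specialize the general Itô--Wiener kernel formula derived above (the displayed expression for $a_{2n}$ immediately preceding the examples) to the multiplication operator $A$, $Af=\phi f$, exploiting the one structural feature that makes this case completely explicit: the vectors $A\1_{[t_i;t_{i+1}]}=\phi\,\1_{[t_i;t_{i+1}]}$, $i=1,\dots,k-1$, have pairwise disjoint supports on $\Delta_k$ and are therefore mutually orthogonal. Consequently the Gramian matrix $B(A\1_{[t_1;t_2]},\dots,A\1_{[t_{k-1};t_k]})$ is diagonal with entries $d_i:=\|A\1_{[t_i;t_{i+1}]}\|^2=\int_{t_i}^{t_{i+1}}\phi^2(r)\,dr$, so that
$$
G(A\1_{[t_1;t_2]},\dots,A\1_{[t_{k-1};t_k]})=\prod_{i=1}^{k-1}\int_{t_i}^{t_{i+1}}\phi^2(r)\,dr .
$$
Substituting this into the $n=0$ term of the Fourier--Wiener expansion \eqref{eq13} at once yields formula \eqref{eq24} for $\E T_k^x$.

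For the higher kernels I would bypass the minor calculus and use the orthogonality directly: since $P_{t_1\ldots t_k}$ is the orthogonal projection onto the span of the orthogonal system $\{\phi\,\1_{[t_i;t_{i+1}]}\}$, one has
$$
\|P_{t_1\ldots t_k}h\|^2=\sum_{i=1}^{k-1}\frac{1}{d_i}\Big(\int_{t_i}^{t_{i+1}}\phi(r)h(r)\,dr\Big)^2 .
$$
Raising this to the $n$-th power by the multinomial theorem and writing each factor $\big(\int_{t_i}^{t_{i+1}}\phi h\big)^2=\int_{[t_i;t_{i+1}]^2}\phi(s)\phi(s')h(s)h(s')\,ds\,ds'$ expresses $\|P_{t_1\ldots t_k}h\|^{2n}$ as the integral of $h^{\otimes 2n}$ against the kernel $\phi^{\otimes 2n}(\vec s)\sum_{i_1,\dots,i_n}\big(\prod_j d_{i_j}\big)^{-1}\1_{[t_{i_1};t_{i_1+1}]^2\times\cdots\times[t_{i_n};t_{i_n+1}]^2}(\vec s)$. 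Inserting this into \eqref{eq13}, where the factor $G^{-1/2}=\big(\prod_i d_i\big)^{-1/2}$ already appears, and integrating over $\Delta_k$ produces exactly the factor $b_{2n}$ of \eqref{eq15}; matching with the Taylor representation \eqref{eq201} of the Fourier--Wiener transform then identifies the kernels $a_{2n}=\tfrac{(-1)^n}{(2\pi)^{\frac{k-1}{2}}}\phi^{\otimes 2n}b_{2n}$, i.e.\ the expansion \eqref{eq14}.

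The step I expect to be delicate is the passage from the homogeneous $h$-polynomial on the cube $[0;1]^{2n}$ to the kernel on the ordered simplex $\Delta_{2n}$ demanded by Theorem \ref{thm4}. One must check that the combinatorial constant $1/(2^n n!)$ sitting in \eqref{eq13} is precisely cancelled by the symmetrization factor incurred when the integral over $[0;1]^{2n}$ against the symmetric integrand $h^{\otimes 2n}$ is reduced to an integral over $\Delta_{2n}$, and that on $\Delta_{2n}$ the only pairings of the $2n$ arguments surviving the support constraints $\1_{[t_{i_j};t_{i_j+1}]^2}$ are those encoded by \eqref{eq15}. Here the interval structure is used once more: on $\{s_1<\dots<s_{2n}\}$ two arguments can lie in a common interval $[t_i;t_{i+1}]$ only together with every argument between them, which restricts the admissible index configurations. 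Confirming that this bookkeeping reproduces \eqref{eq15} with the stated normalization is the only point genuinely requiring care; the convergence and square-integrability of the kernels then follow from the bounds $m\le|\phi|\le M$ together with the integrability of \eqref{eq2} established in the proof of Theorem \ref{thm2}.
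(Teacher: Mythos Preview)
Your approach is essentially identical to the paper's: both start from the Fourier--Wiener transform formula \eqref{eq13}, use the orthogonality of the family $\{\phi\,\1_{[t_i;t_{i+1}]}\}$ on $\Delta_k$ to compute
$$
G\big(A\1_{[t_1;t_2]},\dots,A\1_{[t_{k-1};t_k]}\big)=\prod_{i=1}^{k-1}\int_{t_i}^{t_{i+1}}\phi^2(r)\,dr,
\qquad
\|P_{t_1\ldots t_k}h\|^2=\sum_{i=1}^{k-1}\frac{\big(\int_{t_i}^{t_{i+1}}\phi h\big)^2}{\int_{t_i}^{t_{i+1}}\phi^2},
$$
expand $\|P_{t_1\ldots t_k}h\|^{2n}$ as a sum over $(i_1,\dots,i_n)$, and read off the kernels. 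The paper's proof in fact stops at the integral over $[0,1]^{2n}$ with the factor $(-1)^n/(2^n n!(2\pi)^{(k-1)/2})$ and implicitly identifies this with \eqref{eq14}; your additional paragraph on the passage to $\Delta_{2n}$ scrutinizes a bookkeeping step the paper leaves implicit rather than introducing a different idea. One caution: your expectation that the $1/(2^n n!)$ is ``precisely cancelled'' by the symmetrization factor is not automatic, since the full symmetrization from $[0,1]^{2n}$ to $\Delta_{2n}$ carries a $(2n)!$ rather than a $2^n n!$; the match relies on the pair structure of $\1_{[t_{i_1};t_{i_1+1}]^2\times\cdots\times[t_{i_n};t_{i_n+1}]^2}$ (invariance under the hyperoctahedral subgroup of order $2^n n!$), which is exactly the constraint you allude to but do not work out. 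This is the only point where either argument is not fully explicit.
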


\begin{proof} To prove the lemma we will use the connection between Fourier-Wiener transform and Ito-Wiener expansion. As it was mentioned above, it suffices to find Taylor's expansion of Fourier-Wiener transform of self-intersection local time.
One can check that
$$ \|P_{t_1\ldots t_k}h \| ^{2}= \sum_{i=1}^{k-1}\dfrac{\Big( \int^{t_{i+1}}_{t_i}\phi (r) h(r)dr \Big) ^2}{\int^{t_{i+1}}_{t_i} \phi ^2(r)dr}  $$
and
$$  G\big( A\mathbf{1}_{[ t_{1};t_{2}]},\cdots ,A\mathbf{1}_{[ t_{k-1};t_{k}]} \big) = \prod _{i=1} ^{k-1} \int^{t_{i+1}}_{t_i} \phi ^2 (r)dr . $$
Then \eqref{eq13} equals
$$
\dfrac{(-1)^n}{2^n n!(2\pi )^{\frac{k-1}{2}}} \int_{\Delta _k } \dfrac{d\vec{t}}{\sqrt{\prod _{i=1} ^{k-1} \int^{t_{i+1}}_{t_i} \phi ^2(r)dr} } \sum_{1\leqslant i_1,\ldots ,i_n \leqslant k-1} \prod_{j=1}^{n} \dfrac{\big( \int^{t_{i_{j}+1}}_{t_{i_j}}\phi (r) h(r)dr \big) ^2}{\int^{t_{i_{j}+1}}_{t_{i_j}} \phi ^2(r)dr}  =   $$

$$ =   \dfrac{(-1)^n}{2^n n!(2\pi )^{\frac{k-1}{2}}}\int_{\Delta _k } \dfrac{1}{\sqrt{\prod _{i=1} ^{k-1} \int^{t_{i+1}}_{t_i} \phi ^2(r)dr} } \times $$
$$ \times \left( \sum_{1\leqslant i_1,\ldots ,i_n \leqslant k-1} \int _{[0,1]^{2n}} h^{\bigotimes 2n}(\vec{s})\phi ^{\bigotimes 2n}(\vec{s}) \dfrac{1}{ \prod_{j=1}^{n} \int^{t_{i_{j}+1}}_{t_{i_j}} \phi ^2(r)dr}  \mathbf{1}_{[t_{i_1}; t_{i_{1}+1} ]^2\times \cdots \times [t_{i_n}; t_{i_{n}+1} ]^2} (\vec{s}) d\vec{s} \right) \; d\vec{t} \; .   $$
\end{proof}
Let us focus on Brownian motion case. This example can be derived from the previous example by taking $ \phi $ identically equals one. Then it follows from \eqref{eq14} -- \eqref{eq15} that
$$
T_k^w = \E T_k^w + \sum_{n=1}^{\infty } \dfrac{(-1)^n}{(2\pi )^{\frac{k-1}{2}}} \int _{\Delta _{2n}} \; b_{2n}(\vec{s}) dw(s_1)\ldots dw(s_{2n}),
$$
where kernels $b_{2n}$ are defined by
$$
 b_{2n}(\vec{s} )= \int_{\Delta _k } \dfrac{d\vec{t}}{\sqrt{\prod _{i=1} ^{k-1} (t_{i+1}-{t_i})}} \sum_{1\leqslant i_1,\ldots ,i_n \leqslant k-1}  \dfrac{1}{ \prod_{j=1}^{n} (t_{i_{j}+1}-t_{i_j})}  \mathbf{1}_{[t_{i_1}; t_{i_{1}+1} ]^2\times \cdots \times [t_{i_n}; t_{i_{n}+1} ]^2} (\vec{s}) \; ,
$$
\begin{equation}
\label{eq25}
\E T_k^w = \dfrac{1}{(2\pi )^{\frac{k-1}{2}}} \int_{\Delta _k } \dfrac{d\vec{t}}{\sqrt{\prod _{i=1} ^{k-1} (t_{i+1}-t_i)}  }.
\end{equation}
The square norm of self intersection local time of Wiener process is given by
\begin{equation}
\label{eq16}
\E (T_k^w )^2 =  \Big( \E T_k^w \Big )^2    +  \sum_{n=1}^{\infty } \dfrac{(2n!) ^2}{(n!) ^2 2^{2n}(2\pi )^{k-1}} \int _{\Delta _{2n}} b_{2n} ^2(\vec{s}) d\vec{s}.
\end{equation}
Applying Lemma \ref{lem6} one can conclude
\begin{corollary}
\label{corollary1}
A square norm of self intersection local time of Wiener process has the form
$$
\E (T_k^w )^2 = \dfrac{1}{ 2^{k-1} \Gamma \big(  \frac{k+3}{2} \big)^2  }\; + $$

\begin{equation}
\label{eq19}
  +\sum_{n=1}^{\infty } \dfrac{(2n)!}{n! ^2 2^{2n}(2\pi )^{k-1}} \int_{\Delta _k ^{2} } \; \dfrac{ d\vec{t} \;  d\vec{t'}}{\sqrt{\prod _{i=1} ^{k-1}( t_{i+1}-t_i)( t'_{i+1}-t'_i)} }\left[ \; \sum_{ 1 \leqslant i,j \leqslant k-1 } \;\dfrac{ \lambda \; \big( [t_{i};t_{i+1}]\cap [t'_{j};t'_{j+1}] \big)^2 }{( t_{i+1} - t_{i} )\; ( t'_{j+1}- t'_{j})} \right]  ^n \;.
\end{equation}
Here $\lambda $ is Lebesgue measure.
\end{corollary}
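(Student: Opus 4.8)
The plan is to read off the two contributions to $\E(T_k^w)^2$ directly from the general second--moment formula \eqref{eq16}: the square of the mean $(\E T_k^w)^2$ and the series assembled from the kernels $b_{2n}$. Each is evaluated in closed form by specializing the multiplication--operator computation of Lemma \ref{lem7} to $\phi\equiv 1$ and invoking the simplex integral of Lemma \ref{lem6}.

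First I would dispose of the constant term. Putting $\phi\equiv 1$ in \eqref{eq25} gives $\E T_k^w=(2\pi)^{-(k-1)/2}\int_{\Delta_k}\big(\prod_{i=1}^{k-1}(t_{i+1}-t_i)\big)^{-1/2}\,d\vec{t}$, and Lemma \ref{lem6} with $a=0$, $b=1$ evaluates the integral as $\pi^{(k-1)/2}/\Gamma(\tfrac{k+3}{2})$. Hence $\E T_k^w=2^{-(k-1)/2}/\Gamma(\tfrac{k+3}{2})$, and squaring produces the first summand $1/\big(2^{k-1}\Gamma(\tfrac{k+3}{2})^2\big)$ of \eqref{eq19}.

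For the series I would insert the $\phi\equiv 1$ kernel of \eqref{eq15} into the $n$--th term of \eqref{eq16} and expand $b_{2n}^2$ by introducing a second copy $\vec{t}'\in\Delta_k$ of the time variables and a second index tuple $(i'_1,\dots,i'_n)$, turning $\int_{\Delta_{2n}}b_{2n}^2\,d\vec{s}$ into a double integral over $\Delta_k\times\Delta_k$. The decisive step is the $\vec{s}$--integration: the product $\1_{[t_{i_1};t_{i_1+1}]^2\times\cdots}\cdot\1_{[t'_{i'_1};t'_{i'_1+1}]^2\times\cdots}$ forces the $j$--th pair of $\vec{s}$--coordinates into $[t_{i_j};t_{i_j+1}]\cap[t'_{i'_j};t'_{i'_j+1}]$, so the integral factorizes over the $n$ pairs into $\prod_{j=1}^n\lambda([t_{i_j};t_{i_j+1}]\cap[t'_{i'_j};t'_{i'_j+1}])^2$. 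The double sum over the two index tuples then decouples pair by pair and collapses to the $n$--th power $\big[\sum_{i,j}\lambda([t_i;t_{i+1}]\cap[t'_j;t'_{j+1}])^2/((t_{i+1}-t_i)(t'_{j+1}-t'_j))\big]^n$, which is precisely the bracket in \eqref{eq19}; the leftover weights combine into $\big(\prod(t_{i+1}-t_i)(t'_{j+1}-t'_j)\big)^{-1/2}$.

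The step I expect to be the main obstacle is the combinatorial bookkeeping that converts the prefactor $((2n)!)^2/((n!)^2 2^{2n})$ in \eqref{eq16} into $(2n)!/(n!^2 2^{2n})$ in \eqref{eq19}. This hinges on tracking how the $2n$ Wiener--integration variables group into the $n$ delta--factor pairs and on the symmetry factor $(2n)!$ relating integration over the chronological simplex $\Delta_{2n}$ to the factorized integration that produced the $\prod_j\lambda(\cdot)^2$ above; it is exactly this $(2n)!$ that cancels one factorial and leaves the coefficient $\binom{2n}{n}2^{-2n}=(2n)!/(n!^2 2^{2n})$. Once this factor is verified, substituting back, summing, and recalling that $\lambda$ is Lebesgue measure yields \eqref{eq19}.
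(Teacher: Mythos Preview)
Your proposal is correct and follows essentially the same route as the paper: both compute the constant term via \eqref{eq25} and Lemma~\ref{lem6}, and both evaluate $\int_{\Delta_{2n}}b_{2n}^2\,d\vec{s}$ by expanding the square with a second copy $\vec{t}'\in\Delta_k$, carrying out the $\vec{s}$--integration to produce $\prod_j\lambda([t_{i_j};t_{i_j+1}]\cap[t'_{j_j};t'_{j_j+1}])^2$, and collapsing the double index sum to the $n$--th power of the bracket. The combinatorial step you flag---the passage from integration over $\Delta_{2n}$ to the factorized $[0,1]^{2n}$ picture at the cost of a factor $1/(2n)!$, which cancels one of the $(2n)!$'s in \eqref{eq16}---is exactly how the paper handles it (the $1/(2n)!$ appears in the very first displayed line of its proof).
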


\begin{proof}
Note that
$$  \int _{\Delta _{2n}} b_{2n} ^2 (\vec{s})d\vec{s} =  \dfrac{1}{2n \; !}\int_{\Delta^2 _k} \dfrac{ d\vec{t} \;  d\vec{t'}}{\sqrt{\prod _{i=1} ^{k-1}( t_{i+1}-t_i)( t'_{i+1}-t'_i)} } \sum_{\substack{ 1 \leqslant i_1,\ldots ,i_n \leqslant k-1 \\ 1 \leqslant j_1,\ldots ,j_n \leqslant k-1}} $$
$$ \int _{[0,1]^{2n}} \dfrac{\mathbf{1}_{[t_{i_1}; t_{i_{1}+1} ]^2}(s_1,s_2)\mathbf{1}_{[t'_{j_1}; t'_{j_{1}+1} ]^2}(s_1,s_2) \cdots \mathbf{1}_{[t_{i_n};t_{i_{n}+1} ]^2}(s_{2n-1},s_{2n})\mathbf{1}_{[t'_{j_n}; t'_{j_{n}+1} ]^2}(s_{2n-1},s_{2n})}{( t_{i_{1}+1} - t_{i_1} )\cdots ( t'_{j_{n}+1}- t'_{j_n})}  d\vec{s} \; = $$
$$   =  \dfrac{1}{2n \; !} \int_{\Delta _k \times \Delta _k } \; \dfrac{ d\vec{t} \;  d\vec{t'}}{\sqrt{\prod _{i=1} ^{k-1}( t_{i+1}-t_i)( t'_{i+1}-t'_i)} } \; \sum_{\substack{ 1 \leqslant i_1,\ldots ,i_n \leqslant k-1 \\ 1 \leqslant j_1,\ldots ,j_n \leqslant k-1}} \;  $$
$$\; \prod_{r=1}^n \; \dfrac{  \; \lambda \; \big( [t_{i_r};t_{i_{r}+1}]\cap [t'_{i_r};t'_{i_{r}+1}] \big) ^2 }{( t_{i_{r}+1} - t_{i_r} )\; ( t'_{j_{r}+1}- t'_{j_r})} \; = $$
$$ = \;\dfrac{1}{2n \; !} \int_{\Delta _k \times \Delta _k } \; \dfrac{ d\vec{t} \;  d\vec{t'}}{\sqrt{\prod _{i=1} ^{k-1}( t_{i+1}-t_i)( t'_{i+1}-t'_i)} } \;\left[ \; \sum_{ 1 \leqslant i,j \leqslant k-1 } \;\dfrac{ \lambda \; \big( [t_{i};t_{i+1}]\cap [t'_{j};t'_{j+1}] \big)^2 }{( t_{i+1} - t_{i} )\; ( t'_{j+1}- t'_{j})} \right]  ^n \;  $$
which ends the proof.
\end{proof}
Let us estimate a rate of convergence of series \eqref{eq16} in the case of double self intersection local time. To do this we need the following statement which was proved in \cite{11}.

\begin{lem}\cite{11}
\label{lem7}
There exist a constant $c > 0 $ such that for all integers $ n\geqslant 2 $
\begin{equation}
\label{20}
 \int_{\Delta _2 ^{2} } \; \dfrac{1}{\sqrt{( t_{2}-t_1)( t'_{2}-t'_1)} }\left[  \;\dfrac{ \lambda \; \big( [t_{1};t_{2}]\cap [t'_{1};t'_{2}] \big)^2 }{( t_{2} - t_{1} )\; ( t'_{2}- t'_{1})} \right]  ^n d\vec{t} \;  d\vec{t'} \; \leqslant \; \dfrac{c}{n^2}.
 \end{equation}
\end{lem}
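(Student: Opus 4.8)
The plan is to disentangle the overlap factor from the rest by passing to endpoint--length coordinates and integrating out the positions first. Write $a=t_2-t_1$ and $b=t_2'-t_1'$ for the two interval lengths and $x=t_1$, $y=t_1'$ for the left endpoints, so that $[t_1;t_2]=[x;x+a]$ and $[t_1';t_2']=[y;y+b]$, with $0\le x\le 1-a$, $0\le y\le 1-b$ and $a,b\ge 0$. Setting $\ell(x,y,a,b)=\lambda\big([x;x+a]\cap[y;y+b]\big)$, the left-hand side of \eqref{20} becomes
$$
I_n=\int \frac{\ell^{\,2n}}{a^{\,n+\frac12}\,b^{\,n+\frac12}}\,dx\,dy\,da\,db .
$$
Since the integrand is symmetric under exchanging the two intervals, I would restrict to $a\le b$ and multiply by $2$.

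Next I would integrate over the positions $x,y$ with the lengths $a\le b$ held fixed. The key observation is that, as the first interval slides, $\ell$ is a \emph{trapezoidal} function of $x$: it rises linearly from $0$ to $a$, stays on a plateau at height $a$ over an $x$-range of length $b-a$, and then falls linearly back to $0$. On the two sloping parts $d\ell=\pm\,dx$, so substituting $\ell$ for $x$ gives
$$
\int \ell(x)^{2n}\,dx=\frac{2\,a^{2n+1}}{2n+1}+a^{2n}(b-a),
$$
the first summand coming from the two slopes and the second from the plateau. Restricting $x$ and $y$ to $[0,1]$ only shrinks both this integral and the admissible range of $y$ (whose length is at most $1$), so for an upper bound I may retain the full-line value and replace the $y$-integration by the factor $1$, obtaining
$$
I_n\le 2\int_0^1\!\!\int_0^b \frac{1}{a^{\,n+\frac12}\,b^{\,n+\frac12}}\Big(\frac{2\,a^{2n+1}}{2n+1}+a^{2n}(b-a)\Big)\,da\,db .
$$

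Finally I would evaluate the two resulting elementary integrals over $\{0\le a\le b\le 1\}$. Cancelling the powers of $a$ and $b$, the slope term becomes $\frac{4}{2n+1}\int_0^1\!\int_0^b (a/b)^{n+\frac12}\,da\,db$ and the plateau term becomes $2\int_0^1 b^{-n-\frac12}\int_0^b a^{n-\frac12}(b-a)\,da\,db$; both are Beta-type integrals that evaluate in closed form to quantities of order $(2n+1)^{-1}(n+\tfrac32)^{-1}$ and $(n+\tfrac12)^{-1}(n+\tfrac32)^{-1}$ respectively. Their sum is therefore bounded by $c/n^2$ for a suitable constant $c$, which is precisely \eqref{20}; the $1/n^2$ rate is already transparent, one factor $1/n$ arising from the $d\ell$-substitution on the slopes and the other from the length integration.

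The only genuinely delicate point is the position integration: one must check that the truncation of $x,y$ to $[0,1]$ indeed only decreases the integral (so that discarding it is legitimate for an upper bound) and that the trapezoidal description of $\ell$, including the correct plateau length $b-a$ in the regime $a\le b$, survives the clipping. Everything past that is routine Beta-function bookkeeping, and one may note in passing that the bracket in \eqref{20} equals $(\ell^2/(ab))^n\le 1$, which already explains finiteness; the quantitative $n^{-2}$ decay is exactly what the two $1/n$ factors above supply.
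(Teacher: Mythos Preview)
The paper does not give its own proof of this lemma; it is quoted from \cite{11} (Imkeller--P\'erez-Abreu--Vives) and used as a black box. So there is nothing in the paper to compare your argument against.

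On its own merits, your argument is correct and complete. The change to endpoint--length variables and the trapezoidal description of the overlap are exactly right: for fixed $y$ and $a\le b$, the function $x\mapsto\ell(x,y,a,b)$ rises linearly from $0$ to $a$ on an interval of length $a$, stays at $a$ on an interval of length $b-a$, and falls back, so indeed
\[
\int_{\mathbb{R}}\ell^{2n}\,dx=\frac{2a^{2n+1}}{2n+1}+a^{2n}(b-a).
\]
Extending $x$ from $[0,1-a]$ to $\mathbb{R}$ only enlarges a nonnegative integral, and after this extension the $x$-integral no longer depends on $y$ (translation invariance), so the $y$-integration over $[0,1-b]$ contributes at most the factor $1-b\le 1$. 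This dispatches the ``delicate point'' you flag. The remaining two integrals evaluate exactly to
\[
\frac{2}{(2n+1)\big(n+\tfrac32\big)}\qquad\text{and}\qquad\frac{1}{\big(n+\tfrac12\big)\big(n+\tfrac32\big)},
\]
each of order $n^{-2}$, giving the bound \eqref{20}.
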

Using Stirling's formula one can check that
\begin{equation}
\label{eq26}
\dfrac{(2n)! }{(n! 2^{n})^2 }\leq\dfrac{1}{\sqrt{n}}. \;
\end{equation}
Lemma \ref{lem7} implies
\begin{equation}
\label{eq27}
 \dfrac{(2n)! }{n! ^2 2^{2n}2\pi } \int_{\Delta _2 ^{2} } \; \dfrac{1}{\sqrt{( t_{2}-t_1)( t'_{2}-t'_1)} }\left[  \;\dfrac{ \lambda \; \big( [t_{1};t_{2}]\cap [t'_{1};t'_{2}] \big)^2 }{( t_{2} - t_{1} )\; ( t'_{2}- t'_{1})} \right]  ^n \;d\vec{t} \;  d\vec{t'}\leq\ \frac{c}{n^{\frac{5}{2}}},\ c>0.
\end{equation}
It follows from \eqref{eq27} that the series
$$ \sum _{n\geqslant 1} n. \dfrac{(2n)! }{n! ^2 2^{2n}2\pi } \int_{\Delta _2 ^{2} } \; \dfrac{1}{\sqrt{( t_{2}-t_1)( t'_{2}-t'_1)} }\left[  \;\dfrac{ \lambda \; \big( [t_{1};t_{2}]\cap [t'_{1};t'_{2}] \big)^2 }{( t_{2} - t_{1} )\; ( t'_{2}- t'_{1})} \right]  ^n d\vec{t} \;  d\vec{t'}$$
converges, which means that the self-intersection local time $T^w_2$ is stochastically differentiable \cite{18}.
\section{ Clark representation for the self-intersection local time of Gaussian integrators }
Consider a one dimensional Gaussian integrator $x(t)=(A1_{[0;t]},\xi),\ t\in [0;1].$  Suppose that an operator $A$ satisfies assumptions of Theorem \ref{thm2}. The aim of this section is to establish Clark formula for
$$
T^x_k=\int_{\Delta_k}\prod^{k-1}_{i=1}\delta_0(x(t_{i+1})-x(t_i))d\vec{t}
$$
To do this we will consider
$$
\prod^{k-1}_{i=1}\delta_0(x(t_{i+1})-x(t_i))
$$
as generalized functional of white noise $\xi.$ Let us start from the notion of generalized Gaussian functionals. Here we follow articles \cite{7}, \cite{214}, \cite{215}.
As before we suppose that $\xi$ is a white noise in $L_2([0; 1]).$ Assume that $\sigma$-field of random events $\cF$ is generated by $\xi.$ For $F\in L_2(\Omega, \cF, P)$ denote by $F=\sum^{\infty}_{n=0}I_n(f_n)$ its It\^{o}-Wiener expansion. For $\alpha\in\mbR$ let us define Sobolev space of Gaussian functionals of order $\alpha$  by introducing the norm
\begin{equation}
\label{eq2.3.1'}
\|F\|^2_{2, \alpha}=\sum^N_{n=0}(1+n)^\alpha\|I_n(f_n)\|_2
\end{equation}
on the space
$$
\cP=\left\{F\in L_2(\Omega, \cF, P): \ F=\sum^N_{n=0}I_n(f_n), \ \ N\geq1\right\}
$$
of random variables with finite It\^o--Wiener expansion.
\begin{defn}
\label{defn4.1}\cite{213}

Completion of $\cP$ with respect to $\|\cdot\|_{2, \alpha}$ is said to be  Sobolev space $D_{2, \alpha}$ of order $\alpha$.
\end{defn}

It follows from the  definition that for $\alpha_1<\alpha_2$ \ $D_{2, \alpha_1}\supset D_{2, \alpha_2}.$
Put
$$
D^\infty:=\mathop{\cap}\limits_{\alpha>0}D_{2, \alpha},
$$
$$
D^{-\infty}:=\mathop{\cup}\limits_{\alpha>0}D_{2, -\alpha}.
$$
Denote by $D^\infty(\mbR^d)$ the space of random vectors with coordinates from $D^\infty.$
Since $D_{2,0}=L_2(\Omega, \cF, P),$ then for $\alpha\geq0$ elements of spaces $D_{2, \alpha}$ are ``classical'' Gaussian functionals. In the case $\alpha<0$ the elements of $D_{2,\alpha}$ in general can not be considered as  random variables.
\begin{defn}
\label{defn4.2}\cite{213}
 Elements of spaces $D_{2, -\alpha},\ \alpha>0$ are said to be generalized functionals of white noise (generalized Gaussian functionals) \cite{213}.
\end{defn}
Examples of  generalized Gaussian functionals can be obtained as a result of action of Schwartz distributions on the elements of $D_{2,\alpha},\  \alpha>0.$
Let $\cS(\mbR^d)$ be Schwartz space of rapidly decreasing $C^\infty$-functions on $\mbR^d.$ For $k\in\mbZ $ set
\begin{equation}
\label{eq2.3.3}
\|\vf\|_{2k}=\|(1+|x|^2-\Delta)^k\vf\|_{\infty}\,,
\end{equation}
$\vf\in\cS(\mbR^d),  \ \|f\|_\infty=\sup_{x\in\mbR^d}|f(x)|,$ where $\Delta=\sum^d_{i=1}\left(\dfrac{\pt}{\pt x_i}\right)^2.$ Suppose that $\cG_{2k}$ is a completion of $\cS(\mbR^d)$ with respect to \eqref{eq2.3.3}. It is known, that
$$
\cS(\mbR^d)\subset\ldots\subset \cG_2\subset \cG^0=\wh{C}(\mbR^d)\subset \cG_{-2}\subset\ldots\subset\cS'(\mbR^d),
$$
where $\wh{C}(\mbR^d)$ Banach space of all
continuous functions on $\mbR^d$ tending to 0 at infinity, endowed with the supremum norm and $\cS'(\mbR^d)$ is Schwartz space of tempered distributions on $\mbR^d$ \cite{213}. For $F\in D^\infty(\mbR^d)$ denote by $\sigma:=((DF_i, DF_j))^d_{ij=i}$ Gramian matrix for random elements $DF_1, \ldots, DF_d.$ Here $D$ is a stochastic derivative \cite{18}. Assume that\vskip10pt

1) $\det\sigma>0$\vskip10pt

2) $[\det\sigma]^{-1}\in\cap_{1<p<\infty}L_p(\Omega,\cF,P).$\vskip10pt

\noindent
Then the following statement holds.
\begin{thm}\cite{213}
\label{thm4.1}
For every $p\in(1, +\infty)$ and $k=0, 1, 2, \ldots$ there exists a positive constant $c=c_{p, k}$ such that for all $\vf\in\cS(\mbR^d)$ the following relation holds
$$
\|\vf(F)\|_{p, -2k}\leq c\|\vf\|_{-2k}.
$$
\end{thm}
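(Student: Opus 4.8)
The plan is to follow the Watanabe--Sugita scheme for the pull-back of tempered distributions by a nondegenerate Wiener functional, reducing everything to the Malliavin integration-by-parts formula. It suffices to prove the displayed inequality for a fixed $\vf\in\cS(\mbR^d)$. Write $\Lambda:=1+|x|^2-\Delta$, so that $\|\psi\|_{2k}=\|\Lambda^k\psi\|_\infty$. Since $\cG^0=\wh C(\mbR^d)$ and the dual of $\wh C(\mbR^d)$ is the space of finite signed measures, unwinding the duality that defines $\cG_{-2k}$ via the substitution $\psi=\Lambda^{-k}g$ gives the explicit formula $\|\vf\|_{-2k}=\|\Lambda^{-k}\vf\|_{L_1(\mbR^d)}$, which is the form of the right-hand side I shall exploit.

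First I would dualize the left-hand norm: by definition of the negative Sobolev space, $\|\vf(F)\|_{p,-2k}=\sup\{|\E\,\vf(F)G|:\ G\in\cP,\ \|G\|_{q,2k}\le1\}$, where $q$ is conjugate to $p$. Because $F$ is nondegenerate under hypotheses 1), 2), it possesses a smooth density $p_F$, and I introduce the conditional density $\rho_G(x):=\E[G\,|\,F=x]\,p_F(x)$, so that $\E\,\vf(F)G=\int_{\mbR^d}\vf(x)\rho_G(x)\,dx$. Moving the self-adjoint operator $\Lambda^k$ off $\vf$ and onto $\rho_G$ yields $\E\,\vf(F)G=\int_{\mbR^d}(\Lambda^{-k}\vf)(x)\,(\Lambda^k\rho_G)(x)\,dx$, whence $|\E\,\vf(F)G|\le\|\Lambda^{-k}\vf\|_{L_1}\,\|\Lambda^k\rho_G\|_\infty=\|\vf\|_{-2k}\,\|\Lambda^k\rho_G\|_\infty$. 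Thus the entire theorem reduces to the uniform bound $\|\Lambda^k\rho_G\|_\infty\le c\,\|G\|_{q,2k}$.

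The core of the argument is the Malliavin integration-by-parts formula: for each coordinate $i$ there is a first-order operator on functionals, $l_i(G)=\sum_j\delta\big((\sigma^{-1})_{ij}\,G\,DF_j\big)$ with $\delta$ the Skorokhod integral, such that $\E[(\pt_i\vf)(F)G]=\E[\vf(F)\,l_i(G)]$. Since $\Lambda^k$ is a polynomial in the multiplications by $x_i$ and the derivatives $\pt_i$, iterating this identity $2k$ times—multiplication by $x_i$ passing to multiplication by the coordinate $F_i$, and each $\pt_i$ passing to $l_i$—produces a functional $\mathcal H_k(G)$, assembled from $G$ and its stochastic derivatives up to order $2k$, the entries of $DF$ and $D^2F$, and the inverse Malliavin matrix $\sigma^{-1}$, for which $\Lambda^k\rho_G=\rho_{\mathcal H_k(G)}$. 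A further $d$-fold integration by parts represents the point values as $\rho_{\mathcal H_k(G)}(x)=\E[\1_{\{F\ge x\}}\,\delta^{(d)}(\mathcal H_k(G)\,w)]$ with a weight $w$ built from $\sigma^{-1}$ and $DF$; since $\|\1_{\{F\ge x\}}\|_\infty\le1$, Hölder's inequality and Meyer's inequalities bound $\|\rho_{\mathcal H_k(G)}\|_\infty$ by a Sobolev norm of $\mathcal H_k(G)$, and this in turn by $c\,\|G\|_{q,2k}$.

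The main obstacle is precisely the control of these integration-by-parts weights. Every application of $l_i$ and of $\delta$ introduces a factor of $\sigma^{-1}$—expressed through cofactors divided by $\det\sigma$—and further stochastic derivatives of $F$; to keep all the resulting $L_q$-norms finite one needs $[\det\sigma]^{-1}\in\cap_{1<p<\infty}L_p(\Omega,\cF,P)$ together with $F\in D^\infty(\mbR^d)$, which are exactly hypotheses 1) and 2). The remaining technical heart is the bookkeeping that estimates the Sobolev norm of $\mathcal H_k(G)$ by $\|G\|_{q,2k}$ once all the divergences have been taken; this rests on Meyer's inequalities, which guarantee that $\delta$ and the stochastic derivative act boundedly between the spaces $D_{q,\alpha}$. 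With these in hand the constant $c=c_{p,k}$ emerges from Hölder's inequality together with the uniform-in-$x$ estimates on the weights.
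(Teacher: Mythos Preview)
The paper does not prove this theorem; it is quoted verbatim from Watanabe~\cite{213} and used as a black box to make sense of $\prod_i\delta_0(x(t_{i+1})-x(t_i))$ as a generalized Wiener functional. So there is nothing in the paper to compare your argument against.

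That said, your sketch is indeed the Watanabe--Ikeda--Sugita scheme and is on the right track, with one slip worth flagging. You claim $\|\vf\|_{-2k}=\|\Lambda^{-k}\vf\|_{L_1(\mbR^d)}$ by dualizing $\cG_{2k}$ against $\wh C(\mbR^d)$. But the paper (following Watanabe) \emph{defines} $\|\vf\|_{-2k}=\|\Lambda^{-k}\vf\|_\infty$ directly, via \eqref{eq2.3.3} with $k$ replaced by $-k$; the spaces $\cG_{-2k}$ are completions under this sup norm, not duals. Your $L_1$--$L_\infty$ pairing step is therefore not quite the right bridge: the estimate you need at the end is $\sup_x|\Lambda^k\rho_G(x)|$ dominated by the $\cG_{-2k}$ norm, but with the paper's convention the right-hand side is already a sup norm and the matching after integration by parts goes through an $L_1$ bound on the conditional density side (or, more cleanly, through the pointwise representation $\rho_{\mathcal H_k(G)}(x)=\E[\1_{\{F\ge x\}}\delta^{(d)}(\ldots)]$ you wrote down, bounded uniformly in $x$). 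The rest of your outline---iterated Malliavin integration by parts producing $\mathcal H_k(G)$, control of the weights via hypotheses 1), 2), and Meyer's inequalities to land in $\|G\|_{q,2k}$---is exactly the content of Watanabe's proof.
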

Notice that for $k_0\geq1$ and $\vf\in \cG_{-2k_0}$ there exist $\{\vf_\ve\}_{\ve>0}\in\cS(\mbR^d)$ such that $\vf_\ve\overset{\cG_{-2k_0}}{\longrightarrow\vf},$ when $\ve\to0.$
Theorem \ref{thm4.1} implies, that
$$
\|\vf_{\ve_1}(F)-\vf_{\ve_2}(F)\|_{p, -2k_0}\leq c_{p, k}\|\vf_{\ve_1}-\vf_{\ve_2}\|_{-2k_0}\to0
$$
 as $\ve_1, \ \ve_2\to0.$ Consequently, $\{\vf_\ve(F)\}_{\ve>0}$ is fundamental in $D_{2, -2k_0},$ i.e. there exists the limit of $\vf_\ve(F),$ when $\ve\to0$ in $D_{2, -2k_0}.$

\begin{defn}
\label{defn4.2}
The value of a generalized function $\vf$ on $F$ is defined as
$$
\vf(F): =\lim_{\ve\to0}\vf_{\ve}(F).
$$
\end{defn}

Under the condition of Theorem \ref{thm4.1} the  generalized functional $\vf(F)$ has a formal It\^o--Wiener expansion. The elements of this expansion are the limits of corresponding terms in expansion of $\vf_\ve(F).$ In particular, it is natural to define expectation $E\vf(F)$ as the limit of $E\vf_\ve(F).$ Since that
$$
E\delta_0(w(t)-w(s))=\lim_{\ve\to0}Ef_{\ve}(w(t)-w(s))=
$$
$$
\lim_{\ve\to0}\int_{\mbR}f_{\ve}(y)\frac{1}{\sqrt{2\pi}}\frac{1}{\sqrt{t-s}}e^{-\frac{y^2}{2(t-s)}}dy=
$$
$$
=\frac{1}{\sqrt{2\pi}}\frac{1}{\sqrt{t-s}},\ \ve\to0.
$$
Let us find the Fourier--Wiener transform of
$$
\prod^{k-1}_{i=1}\delta_0(x(t_{i+1})-x(t_i)).
$$
Suppose that $G(A\1_{[t_1;t_2]},\ldots,A\1_{[t_{k-1};t_k]})\neq 0,$ then it follows from Theorem \ref{thm4.1} that
$$
\prod^{k-1}_{i=1}\delta_0(x(t_{i+1})-x(t_i))
$$
is the generalized functional from $\xi.$ It's Fourier--Wiener transform may be obtained as follows
$$
\cT\left(\prod^{k-1}_{i=1}\delta_0(x(t_{i+1})-x(t_i))\right)(h)=
\lim_{\ve\to0}\cT\left(\prod^{k-1}_{i=1}f_\ve(x(t_{i+1})-x(t_i))\right)(h)=
$$
$$
=\lim_{\ve\to0}E\int_{\Delta_k}\prod^{k-1}_{i=1}f_\ve(x(t_{i+1})-x(t_i))d\vec{t}\ \cE(h)=
$$
\begin{equation}
\label{eq2.3.4}
=\lim_{\ve\to0}E\int_{\Delta_k}\prod^{k-1}_{i=1}f_\ve((A\1_{[t_i;t_{i+1}]},\xi)+(A\1_{[t_i;t_{i+1}]},h))d\vec{t}.
\end{equation}
Here
$$
\cE(h)=e^{(h,\xi)-\frac{1}{2}\|h\|^2}.
$$
It can be checked \cite{7} that \eqref{eq2.3.4} equals

$$
\dfrac{e^{-\frac{1}{2}\|P_{t_1\ldots t_k}h \| ^{2}}}{(2\pi )^{\frac{k-1}{2}}\sqrt{G(A\mathbf{1}_{[ t_{1};t_{2}]},\cdots ,A\mathbf{1}_{[ t_{k-1};t_{k}]} )} },
$$
where $P_{t_1\ldots t_k}$ is a projection on the linear subspace generated by $A\mathbf{1}_{[ t_{1};t_{2}]},\cdots ,A\mathbf{1}_{[ t_{k-1};t_{k}]}.$
For Wiener process $A=I.$ It implies that
$$
\cT\left(\prod^{k-1}_{i=1}\delta_0(w(t_{i+1})-w(t_i))\right)(h)=
\lim_{\ve\to0}\cT\left(\prod^{k-1}_{i=1}f_\ve(w(t_{i+1})-w(t_i))\right)(h)=
$$
$$
=\dfrac{e^{-\frac{1}{2}\|P_{t_1\ldots t_k}h \| ^{2}}}{(2\pi )^{\frac{k-1}{2}}\sqrt{G(\mathbf{1}_{[ t_{1};t_{2}]},\cdots ,\mathbf{1}_{[ t_{k-1};t_{k}]} )} },
$$
where $P_{t_1\ldots t_k}$ is a projection on the linear subspace generated by $\mathbf{1}_{[ t_{1};t_{2}]},\cdots ,\mathbf{1}_{[ t_{k-1};t_{k}]}.$
Since indicators of disjoint sets are orthogonal, then
$$
\dfrac{e^{-\frac{1}{2}\|P_{t_1\ldots t_k}h \| ^{2}}}{(2\pi )^{\frac{k-1}{2}}\sqrt{G(\mathbf{1}_{[ t_{1};t_{2}]},\cdots ,\mathbf{1}_{[ t_{k-1};t_{k}]} )} }=
$$
$$
=\frac{e^{-\frac{1}{2}\sum^{k-1}_{i=1}\|P_{t_it_{i+1}}h \| ^{2}}}{(2\pi )^{\frac{k-1}{2}}\prod^{k-1}_{i=1}\sqrt{t_{i+1}-t_i}}=
$$
$$
=\prod^{k-1}_{i=1}\frac{e^{-\frac{1}{2}\|P_{t_it_{i+1}}h \| ^{2}}}{\sqrt{(2\pi )}\sqrt{t_{i+1}-t_i}}=
$$
$$
=\prod^{k-1}_{i=1}\lim_{\ve\to0}\cT\Big(f_\ve(w(t_{i+1})-w(t_i))\Big)(h)=
$$
$$
=\prod^{k-1}_{i=1}\cT\Big(\delta_0(w(t_{i+1})-w(t_i))\Big)(h).
$$
Notice that in general case
$$
\cT\Big(\int_{\Delta_k}\prod^{k-1}_{i=1}\delta_0(x(t_{i+1})-x(t_i))d\vec{t}\Big)(h)=
$$
$$
=\lim_{\ve\to0}\cT\Big(\int_{\Delta_k}\prod^{k-1}_{i=1}f_\ve(x(t_{i+1})-x(t_i))d\vec{t}\Big)(h)=
$$
$$
=\lim_{\ve\to0}\int_{\Delta_k}\cT\Big(\prod^{k-1}_{i=1}f_\ve(x(t_{i+1})-x(t_i))\Big)(h)d\vec{t}.
$$
Since
$$
\cT\Big(\prod^{k-1}_{i=1}f_\ve(x(t_{i+1})-x(t_i))\Big)(h)=
$$
$$
=E\prod^{k-1}_{i=1}f_\ve(x(t_{i+1})-x(t_i)+(A^\ast h,\1_{[t_i;t_{i+1}]}))\leq
$$
$$
\leq E\prod^{k-1}_{i=1}f_\ve(x(t_{i+1})-x(t_i))\leq
$$
$$
\leq E\prod^{k-1}_{i=1}\delta_0(x(t_{i+1})-x(t_i))=
$$
$$
=\frac{1}{G(A\1_{[t_1;t_2]},\ldots,A\1_{[t_{k-1};t_k]})}
$$
and we already proved that
$$
\int_{\Delta_k}\frac{1}{G(A\1_{[t_1;t_2]},\ldots,A\1_{[t_{k-1};t_k]})}d\vec{t}<+\infty,
$$
then Lebesgue's dominated convergence theorem implies that
$$
\cT\Big(\int_{\Delta_k}\prod^{k-1}_{i=1}\delta_0(x(t_{i+1})-x(t_i))d\vec{t}\Big)(h)=
$$
$$
=\lim_{\ve\to0}\cT\Big(\int_{\Delta_k}\prod^{k-1}_{i=1}f_\ve(x(t_{i+1})-x(t_i))d\vec{t}\Big)(h)=
$$
$$
=\int_{\Delta_k}\lim_{\ve\to0}\cT\Big(\prod^{k-1}_{i=1}f_\ve(x(t_{i+1})-x(t_i))\Big)(h)d\vec{t}=
$$
$$
=\int_{\Delta_k}\cT\Big(\prod^{k-1}_{i=1}\delta_0(x(t_{i+1})-x(t_i))\Big)(h)d\vec{t}.
$$
Next statement deals with Clark formula for generalized functionals and the relation is treated in a sense of Fourier-Wiener transform.
Suppose that $0 \;< \; s\; <\; t \; <\; 1 $.
\begin{lem}
\label{lem8}
Clark formula for the following generalized functional of Wiener process has the representation
\begin{equation}
\label{11}
 \delta_0 \big( w(t)-w(s) \big) = \dfrac{1}{\sqrt{2\pi (t-s)}} \; + \; \int_s ^t  p'_{t-u} \big( w(u)-w(s) \big) dw(u) \; .
\end{equation}
\end{lem}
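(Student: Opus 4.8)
The plan is to verify the identity at the level of Fourier--Wiener transforms, since the statement is understood in that sense and $\cT$ determines a (generalized) functional uniquely. First I would recall the transform of the left-hand side, already computed above: specializing the formula for $\cT\big(\prod_{i=1}^{k-1}\delta_0(w(t_{i+1})-w(t_i))\big)(h)$ to $k=2$, $t_1=s$, $t_2=t$, and using that $P_{st}$ is the orthogonal projection onto $\1_{[s;t]}$ so that $\|P_{st}h\|^2=\frac{1}{t-s}\big(\int_s^t h(r)\,dr\big)^2$, one gets
$$
\cT\big(\delta_0(w(t)-w(s))\big)(h)=\frac{1}{\sqrt{2\pi(t-s)}}\exp\Big\{-\frac{1}{2(t-s)}\Big(\int_s^t h(r)\,dr\Big)^2\Big\}.
$$
The first summand on the right-hand side of \eqref{11} is a constant, and the transform of a constant $c$ equals $c$ because $E\,\cE(h)=1$; thus it contributes exactly $\frac{1}{\sqrt{2\pi(t-s)}}$, which is $p_{t-s}(0)$. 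It remains to compute the transform of the stochastic integral and to see that this constant is cancelled by a boundary term.

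For the stochastic integral I would use that the integrand $\alpha(u)=p'_{t-u}(w(u)-w(s))$ is adapted on $[s;t]$, so that the It\^o integral coincides with the Skorokhod integral $I(\alpha)$. Exploiting the duality $I=D^{\ast}$ together with $D\cE(h)=h\,\cE(h)$ yields the working rule
$$
\cT\big(I(\alpha)\big)(h)=E\big[(\alpha,D\cE(h))\big]=\int_s^t h(u)\,\cT\big(\alpha(u)\big)(h)\,du.
$$
So the task reduces to the pointwise transform $\cT\big(p'_{t-u}(w(u)-w(s))\big)(h)$. Here I would invoke the Cameron--Martin shift $E[\Phi\,\cE(h)]=E[\Phi(w+H)]$ with $H(r)=\int_0^r h(\rho)\,d\rho$, which turns the transform of $g(w(u)-w(s))$ into the Gaussian average $E\,g\big(w(u)-w(s)+\int_s^u h(r)\,dr\big)$; for $g=p'_{t-u}$ this equals the convolution $(p'_{t-u}*p_{u-s})\big(\int_s^u h(r)\,dr\big)$, and the heat-semigroup identity $p'_a*p_b=p'_{a+b}$ collapses it to $p'_{t-s}\big(\int_s^u h(r)\,dr\big)$.

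Putting the pieces together, the transform of the integral becomes $\int_s^t h(u)\,p'_{t-s}\big(\int_s^u h(r)\,dr\big)\,du$, and the substitution $v=\int_s^u h(r)\,dr$ evaluates it as $p_{t-s}\big(\int_s^t h(r)\,dr\big)-p_{t-s}(0)$. Adding the constant $p_{t-s}(0)$ coming from the first summand cancels the boundary term and leaves exactly $p_{t-s}\big(\int_s^t h(r)\,dr\big)$, which coincides with $\cT\big(\delta_0(w(t)-w(s))\big)(h)$ displayed above. Since the Fourier--Wiener transform is injective, this proves \eqref{11}. I expect the main obstacle to be the rigorous justification of the transform-of-the-integral rule and of the pointwise computation of $\cT\big(p'_{t-u}(w(u)-w(s))\big)(h)$ --- in particular controlling the Cameron--Martin shift and the convolution identity $p'_a*p_b=p'_{a+b}$ at the level of the generalized functionals involved; once these are secured, the remaining calculation is the elementary substitution above.
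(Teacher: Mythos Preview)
Your proposal is correct and follows essentially the same approach as the paper: both arguments work at the level of the Fourier--Wiener transform, use the rule $\cT\big(\int_s^t\eta(u)\,dw(u)\big)(h)=\int_s^t h(u)\,\cT(\eta(u))(h)\,du$, identify $\cT\big(p'_{t-u}(w(u)-w(s))\big)(h)$ via the Cameron--Martin shift and the convolution identity $p'_{t-u}*p_{u-s}=p'_{t-s}$, and close the computation with the Newton--Leibniz step. The only cosmetic difference is the direction of the argument---the paper first writes $\cT(\delta_0(w(t)-w(s)))(h)$ via Newton--Leibniz and then identifies the integrand $\eta$, whereas you compute the transform of the right-hand side directly and match it to the left---but the mathematical content is the same.
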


\begin{proof}
Suppose that
$$  \delta_0 \big( w(t)-w(s) \big) = \dfrac{1}{\sqrt{2\pi (t-s)}} \; + \; \int_s ^t \eta (u)dw(u) \; .$$
Fourier-Wiener transform of $ \delta_0 \big( w(t)-w(s) \big) $ has the following form \cite{7}
$$ \mathcal{T} \Big( \delta_0 \big( w(t)-w(s) \big)  \Big ) (h) = \dfrac{1}{\sqrt{2\pi (t-s)}} \exp \left\lbrace  -\dfrac{\Big ( \int_s ^t h(r)dr\Big )^2} {2(t-s)}  \right\rbrace  \; = \; p_{t-s} \Big( \int_s ^t h(r)dr \Big )\;,
$$
$h\; \in \; L_2([0;1]) . $
Newton-Leibniz formula
$$
p_t(u)-p_t(0)=\int^{u}_{0}p^{\prime}_{t}(v)dv
$$
implies the relation
\begin{equation}
\label{eq30}
\mathcal{T} \Big( \delta_0 \big( w(t)-w(s) \big)  \Big ) (h) = \; \dfrac{1}{\sqrt{2\pi (t-s)}} \; + \; \int_s ^t p'_{t-s} \Big( \int_s ^\tau h(r)dr \Big )h(\tau) d\tau \; .
\end{equation}
It can be checked that
$$
\mathcal{T} \Big( \delta_0 \big( w(t)-w(s) \big)  \Big ) (h) = \; \dfrac{1}{\sqrt{2\pi (t-s)}} \; + \; \mathcal{T} \left\lbrace  \int_s ^t \eta (u)dw(u) \right\rbrace  (h) \; = \;
$$
\begin{equation}
\label{eq31}
 = \; \dfrac{1}{\sqrt{2\pi (t-s)}} \; + \; \int_s ^t  \mathcal{T} \left\lbrace  \eta (u) \right\rbrace (h)  h(u) du \; .
\end{equation}
It follows from \eqref{eq30} and \eqref{eq31} that
 $$   \mathcal{T} \left\lbrace  \eta (u) \right\rbrace (h) \; = \; p'_{t-s} \Big( \int_s ^u h(r)dr \Big )\; .   $$
Let us check that
$$ \displaystyle \mathcal{T} \left\lbrace  p'_{t-u} \big( w(u)-w(s) \big) \right\rbrace (h)  \; = \; p'_{t-s} \Big( \int_s ^u h(r)dr \Big )\; . $$ Really,
$$ \mathcal{T} \left(  p'_{t-u} \big( w(u)-w(s) \big) \right) (h)  \; = \; \E  p'_{t-u} \left( w(u)-w(s)+\int_s ^u h(r)dr  \right)   =  p'_{t-u} \ast p_{u-s} \left( \int_s ^u h(r)dr  \right)   $$
$$ = \; p'_{t-s} \Big( \int_s ^u h (r)dr\Big )\;  $$
which finishes the proof.
\end{proof}
Lemma \ref{lem8} implies the statement.

\begin{thm}
\label{thm5}
A self-intersection local time of Wiener process has the representation
$$
 T_k^w = \;  \dfrac{1}{2^{\frac{k-1}{2}}\Gamma \Big( \frac{k+3}{2} \Big) }\; + \;   \int_0^1 \beta (\tau) dw(\tau )\; ,
$$
where
$$ \beta(\tau)\; = \;  \sum _{r=1}^{k-1} \sum_{1\leqslant i_1 <\ldots <i_r \leqslant k-1  } \int_{\Delta _k}  \dfrac{1}{\prod _{j\neq i_1, \ldots ,i_r} \sqrt{ 2\pi (t_{j+1}-t_j) }}\int_0^1 \ldots \int_0^1 p'_{t_{i_{r}+1}-\tau} \big( w(\tau)-w(t_{i_r}) \big)\; \cdot$$
$$ \cdot\mathbf{1}_{[ t_{i_r};t_{i_r+1} ]}(\tau) \prod _{l=1}^{r-1}  \left(p'_{t_{i_{l}+1}-\tau_l} \big( w(\tau_l)-w(t_{i_l}) \big)\; \mathbf{1}_{[ t_{i_l};t_{i_l+1} ]}(\tau_l)\right)  dw(\tau_1)\cdots dw(\tau_{r-1})  d\vec{t}  .$$
Here
$$\prod _{l=1}^{r-1}  \left(p'_{t_{i_{l}+1}-\tau_l} \big( w(\tau_l)-w(t_{i_l}) \big)\; \mathbf{1}_{[ t_{i_l};t_{i_l+1} ]}(\tau_l)\right)=1
$$
for $r=1.$
\end{thm}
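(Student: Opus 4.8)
The plan is to insert the one-factor Clark representation of Lemma~\ref{lem8} into every factor of the product defining $T_k^w$ and then to collect the resulting terms according to which factors contribute their stochastic part. Abbreviate $c_i=\frac{1}{\sqrt{2\pi(t_{i+1}-t_i)}}$ and $\eta_i=\int_{t_i}^{t_{i+1}}p'_{t_{i+1}-u}(w(u)-w(t_i))\,dw(u)$, so that Lemma~\ref{lem8} reads $\delta_0(w(t_{i+1})-w(t_i))=c_i+\eta_i$. Expanding the product gives
$$
\prod_{i=1}^{k-1}\delta_0(w(t_{i+1})-w(t_i))=\sum_{S\subseteq\{1,\ldots,k-1\}}\Big(\prod_{j\notin S}c_j\Big)\prod_{i\in S}\eta_i .
$$
The empty set $S=\emptyset$ contributes $\prod_j c_j=(2\pi)^{-(k-1)/2}\prod_j(t_{j+1}-t_j)^{-1/2}$, whose integral over $\Delta_k$ is, by Lemma~\ref{lem6} and \eqref{eq25}, exactly $\E T_k^w=\frac{1}{2^{(k-1)/2}\Gamma(\frac{k+3}{2})}$; this produces the leading constant.

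First I would handle the products $\prod_{i\in S}\eta_i$ for a fixed nonempty $S=\{i_1<\cdots<i_r\}$. The decisive structural fact is that on the simplex $\Delta_k$ the intervals $[t_{i_1};t_{i_1+1}],\ldots,[t_{i_r};t_{i_r+1}]$ are pairwise disjoint and increasingly ordered, so the $\eta_{i_l}$ are It\^{o} integrals over disjoint intervals. Consequently no diagonal (bracket) terms arise and the product equals the iterated integral
$$
\prod_{l=1}^{r}\eta_{i_l}=\int_0^1\!\!\cdots\!\int_0^1\prod_{l=1}^{r}\Big(p'_{t_{i_l+1}-\tau_l}(w(\tau_l)-w(t_{i_l}))\,\1_{[t_{i_l};t_{i_l+1}]}(\tau_l)\Big)\,dw(\tau_1)\cdots dw(\tau_r),
$$
in which the outermost variable $\tau_r$ is the largest. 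I would justify this identity through the Fourier--Wiener transform, exactly as in Lemma~\ref{lem8}: using $\cT(\int g\,dw)(h)=\int\cT(g(u))(h)\,h(u)\,du$ together with the factorisation of $\cE(h)$ over disjoint time intervals, both sides carry the same transform and therefore coincide.

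Next I would integrate in $\vec{t}$ over $\Delta_k$ and interchange this deterministic integration with the stochastic one. The interchange is legitimate because the arising kernels are the square-integrable kernels computed in Section~3, so that $\int_0^1\E\beta(\tau)^2\,d\tau<\infty$ follows from the convergence already displayed in Corollary~\ref{corollary1}; moreover the dominating bound $\E\prod_i f_\ve(w(t_{i+1})-w(t_i))\le G(\1_{[t_1;t_2]},\ldots,\1_{[t_{k-1};t_k]})^{-1}$ together with $\int_{\Delta_k}G^{-1}\,d\vec{t}<\infty$ from the proof of Theorem~\ref{thm2} controls the passage to the limit $\ve\to0$. Renaming the largest integration variable $\tau_r$ as $\tau$ and taking the corresponding $dw(\tau)$ outside, each nonempty $S=\{i_1<\cdots<i_r\}$ contributes a summand $\int_0^1(\cdots)\,\1_{[t_{i_r};t_{i_r+1}]}(\tau)\,dw(\tau)$; summing over $r$ and over $1\le i_1<\cdots<i_r\le k-1$ reproduces precisely $\int_0^1\beta(\tau)\,dw(\tau)$ with the stated $\beta$.

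The main obstacle is the rigorous treatment of the two interchanges: replacing a product of stochastic integrals over disjoint intervals by a single iterated integral, and commuting the $\vec{t}$-integration with It\^{o} integration for an object defined only as a generalized functional. Both are resolved by working throughout at the level of Fourier--Wiener transforms, where products over disjoint intervals factor cleanly and Lebesgue's dominated convergence applies with the integrable dominant $G^{-1}$, thereby reducing the whole statement to the single-factor identity of Lemma~\ref{lem8}.
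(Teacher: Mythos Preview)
Your proposal is correct and follows essentially the same route as the paper: insert Lemma~\ref{lem8} into each factor, expand the product over all subsets $S\subseteq\{1,\ldots,k-1\}$, evaluate the $S=\emptyset$ term via Lemma~\ref{lem6}, and use disjointness of the intervals $[t_{i_l};t_{i_l+1}]$ (the paper phrases this as ``independence of increments'') to rewrite $\prod_l\eta_{i_l}$ as a single iterated It\^o integral before extracting the outermost $dw(\tau)$. Your discussion of the two interchanges via the Fourier--Wiener transform and the dominant $G^{-1}$ is more explicit than the paper's, but the argument is the same.
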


\begin{proof}
Using Lemma \ref{lem8} one can write

$$  T_k^w =  \int_{\Delta _k} \prod _{i=1}^{k-1} \delta_0 (w(t_{i+1})-w(t_{i}))d\vec{t} = $$
$$ = \int_{\Delta _k} \prod _{i=1}^{k-1} \left(   \dfrac{1}{\sqrt{2\pi (t_{i+1}-t_i)}} + \int_{t_i} ^{t_{i+1}}p'_{t_{i+1}-\tau} \big( w(\tau)-w(t_i) \big) dw(\tau)   \right) d\vec{t} =  $$
$$  = \dfrac{1}{(2\pi )^{\frac{k-1}{2}}} \int_{\Delta _k } \dfrac{d\vec{t}}{\sqrt{\prod _{i=1} ^{k-1}(t_{i+1}-t_i )}  } \; +   $$
$$   + \sum _{r=1}^{k-1} \sum_{1\leqslant i_1 <\ldots <i_r \leqslant k-1  }\int_{\Delta _k}  \dfrac{1}{\prod _{j\neq i_1, \ldots ,i_r} \sqrt{ 2\pi (t_{j+1}-t_j) }} \; \prod _{l=1}^r  \int_{t_{i_l}} ^{t_{i_l +1}} p'_{t_{i+1}-\tau} \big( w(\tau)-w(t_i) \big) dw(\tau)  d\vec{t} $$

$$  = \dfrac{1}{2^{\frac{k-1}{2}}\Gamma \Big( \frac{k+3}{2} \Big) }\; +   $$
$$   + \sum _{r=1}^{k-1} \sum_{1\leqslant i_1 <\ldots <i_r \leqslant k-1  }\int_{\Delta _k}  \dfrac{1}{\prod _{j\neq i_1, \ldots ,i_r} \sqrt{ 2\pi (t_{j+1}-t_j) }} \;  \prod _{l=1}^r  \int_{t_{i_l}} ^{t_{i_l +1}} p'_{t_{i+1}-\tau} \big( w(\tau)-w(t_i) \big)dw(\tau) d\vec{t} \; .$$
Since increments of Wiener process are independent, then
$$ \prod _{l=1}^r  \int_{t_{i_l}} ^{t_{i_l +1}}  p'_{t_{i+1}-\tau} \big( w(\tau)-w(t_i) \big)dw(\tau)   = $$
$$ =  \int_{t_{i_r}} ^{t_{i_r+1}} \cdots  \int_{t_{i_1}} ^{t_{i_1+1}} \prod _{l=1}^r  p'_{t_{i_{l}+1}-\tau_l} \big( w(\tau_l)-w(t_{i_l}) \big)dw(\tau_1)\cdots dw(\tau_{r}) \; = $$
$$  = \; \int_0^1 \cdots \int_0^1 \; \prod _{l=1}^r  p'_{t_{i_{l}+1}-\tau_l} \big( w(\tau_l)-w(t_{i_l}) \big)\; \mathbf{1}_{[ t_{i_l};t_{i_l+1} ]}(\tau_l)dw(\tau_1)\cdots dw(\tau_{r}) \; . $$
Consequently,
$$  T_k^w =  \;  \dfrac{1}{2^{\frac{k-1}{2}}\Gamma \Big( \frac{k+3}{2} \Big) }\; + \; \sum _{r=1}^{k-1} \sum_{1\leqslant i_1 <\ldots <i_r \leqslant k-1  } \int_{\Delta _k} \dfrac{1}{\prod _{j\neq i_1, \ldots ,i_r} \sqrt{ 2\pi (t_{j+1}-t_j) }} \cdot $$
$$\cdot      \;   \int_0^1 \cdots \int_0^1 \; \prod _{l=1}^r   p'_{t_{i_{l}+1}-\tau_l} \big( w(\tau_l)-w(t_{i_l}) \big)\; \mathbf{1}_{[ t_{i_l};t_{i_l+1} ]}(u_l) dw(\tau_1)\cdots dw(\tau_{r})  d\vec{t} \;=$$

$$=  \;     \dfrac{1}{2^{\frac{k-1}{2}}\Gamma \Big( \frac{k+3}{2} \Big) }\; + \;   \int_0^1  \; \sum _{r=1}^{k-1} \sum_{1\leqslant i_1 <\ldots <i_r \leqslant k-1  } \int_{\Delta _k}  \dfrac{1}{\prod _{j\neq i_1, \ldots ,i_r} \sqrt{ 2\pi (t_{j+1}-t_j) }} \cdot $$
$$\cdot    \int_0^1 \cdots \int_0^1 p'_{t_{i_{r}+1}-\tau} \big( w(\tau)-w(t_{i_r}) \big)\; \mathbf{1}_{[ t_{i_r};t_{i_r+1} ]}(\tau) \prod _{l=1}^{r-1} p'_{t_{i_{l}+1}-\tau_l} \big( w(\tau_l)-w(t_{i_l}) \big)\cdot\;
$$
$$
\cdot\mathbf{1}_{[ t_{i_l};t_{i_l+1} ]}(\tau_l)dw(\tau_1)\cdots dw(\tau_{r-1})  d\vec{t} \;dw(\tau )\; .  $$
\end{proof}
Let us establish Clark formula for a self intersection local time of Gaussian integrator. For $\vec{t}= (t_1,\ldots , t_{k} ) \in \Delta _k,\ t \in [0;1]$ denote by $ p_{\vec{t}},\ p_{\vec{t},t}$ densities of distribution of vectors
$$ X = \; \big( x(t_2)-x(t_1),\cdots, x(t_{k})-x(t_{k-1}) \big ) $$
and
$$ X(\vec{t},t) = \; \big( x(t_1+t(t_2-t_1))-x(t_1),\cdots, x(t_{k-1}+t(t_{k}-t_{k-1}))-x(t_{k-1}) \big )
$$
correspondingly. Put
$$ B_{\vec{t},t} \; = \; B \Big( A\mathbf{1}_{[t_1;t_1+t(t_2-t_1)]},\cdots ,  A\mathbf{1}_{[t_{k-1};t_{k-1}+t(t_k-t_{k-1})] }\Big ) \; ,   $$
$$ R _{\vec{t},t} \; = \; B_{\vec{t},1} - B_{\vec{t},t},$$
where as before $B(e_1,\ldots,e_n)$ is Gramian matrix constructed from elements $e_1,\ldots, e_n.$ Denote by $\Delta^{0}_{k}=\{0<t_1<\ldots<t_k<1\}.$
\begin{thm}
\label{thm6}
Suppose that for every $ 0 \; < \; t\; < \; 1,\ \vec{t}\in\Delta^{0}_{k}$ the matrix $ R _{\vec{t},t} \;$ is positive definite. Let $ p_{R_{\vec{t},t}} $ be a density of distribution $ \mathcal{N} \big( 0,R _{\vec{t},t} \big )  $. Then
$$
T_k^x \; = \; \E \; T_k^x  \; +\int^{1}_{0}\beta(\tau)dx(\tau),
$$
where
$$
\beta(\tau)= \; \int_{\Delta _k} \; \sum _{j=1}^{k-1} \mathbf{1}_{[t_j;t_{j+1}]}(\tau)   \partial _j  p_{R_{\vec{t}, \tau}} \big ( X(\vec{t},\tau) \big ) \; d\vec{t}
$$
and $ \partial _j $ denotes the j-th partial derivative.
\end{thm}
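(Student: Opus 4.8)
The plan is to verify the identity at the level of Fourier--Wiener transforms, exactly as in the proof of the one--interval Clark formula \eqref{11}: since $\mathcal T$ is injective on $L_2(\Omega,\mathcal F,P)$ and on its generalized extension, it suffices to check that $\mathcal T(T_k^x)(h)$ coincides with $\mathcal T\big(\E T_k^x + \int_0^1\beta(\tau)dx(\tau)\big)(h)$ for every $h\in L_2([0;1])$. The left-hand side is already known: by \eqref{eq13} it equals $\int_{\Delta_k}p_{B_{\vec t,1}}(\mu_{\vec t}(h))\,d\vec t$, where $\mu_{\vec t}(h)$ is the vector with components $(A\mathbf 1_{[t_i;t_{i+1}]},h)=\int_{t_i}^{t_{i+1}}(A^\ast h)(r)\,dr$, because $\det B_{\vec t,1}=G(A\mathbf 1_{[t_1;t_2]},\dots,A\mathbf 1_{[t_{k-1};t_k]})$ and $\mu_{\vec t}(h)^{\!\top}B_{\vec t,1}^{-1}\mu_{\vec t}(h)=\|P_{t_1\ldots t_k}h\|^2$. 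Since $\mathcal T$ sends constants to themselves, the contribution of $\E T_k^x=\int_{\Delta_k}p_{B_{\vec t,1}}(0)\,d\vec t$ is the value of this family at the origin, so the whole problem reduces to identifying $\mathcal T\big(\int_0^1\beta(\tau)dx(\tau)\big)(h)$ with $\int_{\Delta_k}\big(p_{B_{\vec t,1}}(\mu_{\vec t}(h))-p_{B_{\vec t,1}}(0)\big)d\vec t$.

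For the stochastic integral against the integrator I would use the transform rule $\mathcal T\big(\int_0^1\beta(\tau)dx(\tau)\big)(h)=\int_0^1\mathcal T(\beta(\tau))(h)\,(A^\ast h)(\tau)\,d\tau$, which extends the Wiener rule used in \eqref{eq31} (there $A=I$, $A^\ast h=h$) and is consistent with the deterministic case $\int_0^1 f\,dx=(Af,\xi)$, whose transform is $(f,A^\ast h)$. The heart of the computation is then $\mathcal T$ of the integrand $\partial_j p_{R_{\vec t,\tau}}(X(\vec t,\tau))$. Applying the shift property $\mathcal T(g(X(\vec t,\tau)))(h)=\E\,g\big(X(\vec t,\tau)+\nu\big)$, with $\nu$ the mean shift of the partial-increment vector, and then averaging the nondegenerate Gaussian kernel $\partial_j p_{R_{\vec t,\tau}}$ against the (possibly degenerate) law $\mathcal N(0,B_{\vec t,\tau})$ of $X(\vec t,\tau)$, the convolution semigroup property gives
$$\mathcal T\big(\partial_j p_{R_{\vec t,\tau}}(X(\vec t,\tau))\big)(h)=\partial_j\big(p_{R_{\vec t,\tau}}\ast p_{B_{\vec t,\tau}}\big)(\nu)=\partial_j p_{R_{\vec t,\tau}+B_{\vec t,\tau}}(\nu).$$
This is the decisive step: by the very definition $R_{\vec t,\tau}=B_{\vec t,1}-B_{\vec t,\tau}$ the covariance collapses, $R_{\vec t,\tau}+B_{\vec t,\tau}=B_{\vec t,1}$, so the $\tau$-dependence survives only through the shift $\nu$, and the Gaussian kernel becomes the \emph{fixed} density $p_{B_{\vec t,1}}$.

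It remains to integrate in $\tau$. Parametrizing so that at time $\tau\in[t_j;t_{j+1}]$ the first $j-1$ increments are complete, the $j$-th is the partial increment $x(\tau)-x(t_j)$, and the later ones vanish, only the $j$-th coordinate of $\nu$ depends on $\tau$, with $\frac{d}{d\tau}\nu_j=(A^\ast h)(\tau)$. Hence the weight $(A^\ast h)(\tau)\,d\tau$ from the transform rule turns $\sum_j\mathbf 1_{[t_j;t_{j+1}]}(\tau)\,\partial_j p_{B_{\vec t,1}}(\nu)$ into the exact $\tau$-derivative $\frac{d}{d\tau}p_{B_{\vec t,1}}(\nu)$ on each subinterval; the fundamental theorem of calculus, together with the fact that the endpoint value at $\tau=t_{j+1}$ of the $j$-th term matches that at $\tau=t_{j+1}$ of the $(j{+}1)$-th term, telescopes the sum to the single difference $p_{B_{\vec t,1}}(\mu_{\vec t}(h))-p_{B_{\vec t,1}}(0)$. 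Integrating over $\Delta_k$ and adding $\E T_k^x$ reproduces $\mathcal T(T_k^x)(h)$, which proves the identity.

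The main obstacle I anticipate is not this formal computation but its rigorous justification as a statement about generalized functionals. One must verify that $\beta$ is a well-defined integrand and that $\int_0^1\beta(\tau)dx(\tau)$ exists as a Skorokhod integral with respect to $x$ in a suitable Sobolev space $D_{2,-\alpha}$, since the kernel $\partial_j p_{R_{\vec t,\tau}}$ degenerates as $R_{\vec t,\tau}\to0$; this is precisely where the hypothesis that $R_{\vec t,\tau}$ stays positive definite for $\vec t\in\Delta_k^0$, $0<\tau<1$, enters. One must also justify the transform rule for the $x$-integral and the interchange of $\int_{\Delta_k}d\vec t$, $\int_0^1 d\tau$ and the regularizing limit $p_\ve\to\delta_0$ by dominated convergence, controlled through the integrability of $1/\sqrt{G(\cdots)}$ over $\Delta_k^{2p}$ established in Theorem \ref{thm2}; these estimates are the technical crux of the argument.
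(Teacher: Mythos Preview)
Your overall strategy---compare Fourier--Wiener transforms, use the shift/convolution identity to collapse $R_{\vec t,\tau}+B_{\vec t,\tau}$ to $B_{\vec t,1}$, then apply Newton--Leibniz---is exactly the paper's. The discrepancy is in how you parametrize the path from $0$ to $\mu_{\vec t}(h)$. You take a \emph{sequential} filling: for $\tau\in[t_j,t_{j+1}]$ the first $j{-}1$ coordinates of $\nu$ are the full increments, the $j$-th is $\int_{t_j}^{\tau}(A^\ast h)$, the rest vanish. This is precisely what makes your telescoping step (``only the $j$-th coordinate of $\nu$ depends on $\tau$'') work. But the objects $X(\vec t,t)$, $B_{\vec t,t}$, $R_{\vec t,t}$ in the statement are defined with a \emph{simultaneous} progress parameter $t\in[0,1]$: the $i$-th component of $X(\vec t,t)$ is $x(t_i+t(t_{i+1}-t_i))-x(t_i)$, so every coordinate of the shift $\nu$ moves with $\tau$. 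Under that definition your ``exact $\tau$-derivative'' assertion fails, and your argument---if made internally consistent with the sequential choice---produces a Clark representation with an integrand different from the theorem's $\beta$ (and the positive-definiteness hypothesis on $R_{\vec t,t}$ is also stated for the simultaneous version, not yours).

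The paper avoids telescoping altogether. It applies Newton--Leibniz directly in the progress variable $t\in[0,1]$ to $p_{\vec t}(V(t))$, obtaining
\[
p_{\vec t}(V)-p_{\vec t}(0)=\int_0^1\sum_{j=1}^{k-1}\partial_j p_{\vec t}(V(t))\,(t_{j+1}-t_j)\,(A^\ast h)\big(t_j+t(t_{j+1}-t_j)\big)\,dt,
\]
identifies $\partial_j p_{\vec t}(V(t))=\mathcal T\big(\partial_j p_{R_{\vec t,t}}(X(\vec t,t))\big)(h)$ via the convolution argument you describe, and only then substitutes $s_j=t_j+t(t_{j+1}-t_j)$ \emph{separately in each $j$-summand}. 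This lands on $\sum_j\int_{t_j}^{t_{j+1}}\mathcal T(\partial_j p_{R_{\vec t,\cdot}}(X(\vec t,\cdot)))(h)\,(A^\ast h)(s_j)\,ds_j$, which is the transform of $\int_0^1\beta\,dx$. In the final formula the symbol $X(\vec t,\tau)$ for $\tau\in[t_j,t_{j+1}]$ is shorthand for $X\big(\vec t,(\tau-t_j)/(t_{j+1}-t_j)\big)$; this notational overloading is likely what drew you toward the sequential reading.
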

\begin{proof} Note that
\begin{align*}
\mathcal{T}\big( T_k^x \big ) (h)
& = \int_{\Delta _k} \; \mathcal{T}\Big( \prod _{i=1}^{k-1} \delta_0 (x(t_{i+1})-x(t_{i})) \Big ) (h) \; d\vec{t} \\
& = \;  \int_{\Delta _k} \E \left\lbrace \delta^{k-1} _0 \Big ( X+V \Big ) \right\rbrace \; d\vec{t},
\end{align*}
where
$ \delta^{k-1} _0(y)=\delta _0(y_1)\cdot\ldots\cdot\delta _0(y_{k-1}),\ y\in\mbR^{k-1},$
$$  V = \; \Big( \int_{t_1}^{t_2} A^{*}h(r)dr \; ,\cdots ,\; \int_{t_{k-1}}^{t_k} A^{*}h(r)dr \; \Big ) \; .$$
Hence
$$ \mathcal{T}\big( T_k^x \big ) (h) = \; \int_{\Delta _k} \;p_{\vec{t}} (V) \; d\vec{t} \; . $$
Changing variables $r=t_{j}+\theta( t_{j+1}-t_j),\ j=1,\cdots,k-1$ one can see that $V$ can be represented as follows
$$  V\; = \; \Big( \big( t_2-t_1 \big ) \int_{0}^{1} (A^{*}h ) \Big ( t_1+\theta \big( t_2-t_1 \big ) \Big ) d\theta \; ,\cdots ,\; \big( t_k-t_{k-1} \big ) \int_{0}^{1} (A^{*}h ) \Big ( t_{k-1}+\theta \big( t_k-t_{k-1} \big ) \Big ) d\theta \;\; \Big ) \; . $$
Put
$$ V(t)\; = \;  \Big( \big( t_2-t_1 \big ) \int_{0}^{t} (A^{*}h ) \Big ( t_1+\theta \big( t_2-t_1 \big ) \Big ) d\theta \; ,\cdots ,\; \big( t_k-t_{k-1} \big ) \int_{0}^{t} (A^{*}h ) \Big ( t_{k-1}+\theta \big( t_k-t_{k-1} \big ) \Big ) d\theta \;\; \Big ) \; . $$
Newton-Leibniz formula gives the relation
$$   p_{\vec{t}} (V) \; = \; p_{\vec{t}} (V(0)) \; + \; \int_0^1 \sum _{j=1}^{k-1} \partial _j p_{\vec{t}} (V(t)) \big( t_{j+1}-t_{j} \big ) (A^{*}h ) \Big ( t_{j}+ t \big( t_{j+1}-t_{j} \big ) \Big ) \; dt \;. $$
Therefore,
$$   p_{\vec{t}} (V) \; = \; p_{\vec{t}} (0) \; + \; \int_0^1 \sum _{j=1}^{k-1} \partial _j p_{\vec{t}} (V(t)) \big( t_{j+1}-t_{j} \big ) (A^{*}h ) \Big ( t_{j}+ t \big( t_{j+1}-t_{j} \big ) \Big ) \; dt \;. $$
Consequently,
$$   \mathcal{T}\big( T_k^x \big ) (h) = \E \; T_k^x  \; +  $$
\begin{equation}
\label{eq32}
+ \; \int_{\Delta _k} \int_0^1 \sum _{j=1}^{k-1} \partial _j p_{\vec{t}} (V(t)) \big( t_{j+1}-t_{j} \big ) (A^{*}h ) \Big ( t_{j}+ t \big( t_{j+1}-t_{j} \big ) \Big ) \; dt \; d\vec{t} \; .
\end{equation}
Note that
\begin{align*}
 \partial _j p_{\vec{t}} (V(t)) & =  \partial _j \big(  p_{R_{\vec{t},t}} \ast p_{\vec{t},t}  \big ) (V(t))  \\
  & =   \big( \partial _j  p_{R_{\vec{t},t}}   \big )\ast p_{\vec{t},t} (V(t)) \\
  &= \E \big( \partial _j  p_{R_{\vec{t},t}}   \big ) \Big (  X_{\vec{t},t} + V(t) \Big )  \\
& = \mathcal{T}\Big( \partial _j  p_{R_{\vec{t},t}} \big ( X_{\vec{t},t} \big )  \Big ) (h) \; .\\
\end{align*}
Changing variables $ s_j =  t_{j}+ t \big( t_{j+1}-t_{j} \big ) , \; j = 1, \ldots , k-1 \;  $ one can obtain that \eqref{eq32} equals
$$ \E \; T_k^x  \; + \; \int_{\Delta _k}\sum _{j=1}^{k-1} \int_{t_j}^{t_{j+1}}  \mathcal{T}\Big( \partial _j  p_{R_{\vec{t}, s_j}} \big ( X(\vec{t},s_j) \big )  \Big ) (h) (A^{*}h ) (s_j ) \; ds_j \; d\vec{t}.\;  $$
Therefore,
$$   \mathcal{T}\big( T_k^x \big ) (h) = \E \; T_k^x  \; + \; \mathcal{T} \left(\sum _{j=1}^{k-1}   \int_{\Delta _k} \int_{t_j}^{t_{j+1}}  \partial _j  p_{R_{\vec{t}, s_j}} \big ( X(\vec{t},s_j) \big ) \; dx(s_j) \; d\vec{t} \;  \right) (h)\; , $$
where the integral over $dx(s_j)$ is an extended stochastic integral with respect to a Gaussian integrator $x$. Consequently, Clark formula for $ T_k^x  $ has the following representation
$$T_k^x   =  \E \; T_k^x  \; + \; \int_{\Delta _k} \; \sum _{j=1}^{k-1}  \int_{t_j}^{t_{j+1}}   \partial _j  p_{R_{\vec{t}, s_j}} \big ( X(\vec{t},s_j) \big ) \;  dx(s_j) \; d\vec{t} \;=$$
$$
=\E \; T_k^x  \; + \int^1_0\; \; \int_{\Delta _k} \; \sum _{j=1}^{k-1} \mathbf{1}_{[t_j;t_{j+1}]}(\tau)   \partial _j  p_{R_{\vec{t}, \tau}} \big ( X(\vec{t},\tau) \big ) \; d\vec{t}\;dx(\tau).
$$
\end{proof}

\end{document}